\theoremstyle{thmstyletwo}%
\newtheorem{theorem}{Theorem}
\newtheorem{remark}{Remark}%
\newtheorem{lemma}{Lemma}%
\newcommand{\ds}{\displaystyle}
\numberwithin{equation}{section}
\begin{document}

\DOI{DOI HERE}
\copyrightyear{2022}
\vol{00}
\pubyear{2022}
\appnotes{Paper}
\copyrightstatement{Published by Oxford University Press on behalf of the Institute of Mathematics and its Applications. All rights reserved.}
\firstpage{1}


\title[Numerical analysis of constrained high-index saddle dynamics]{Discretization and index-robust error analysis for constrained high-index saddle dynamics on high-dimensional sphere}

\author{Lei Zhang
\address{\orgdiv{Beijing International Center for Mathematical Research, Center for Quantitative Biology, Center for Machine Learning Research}, \orgname{Peking
University}, \orgaddress{\street{Beijing}, \postcode{100871}, \country{China}}}}
\author{Pingwen Zhang
\address{\orgdiv{School of Mathematical Sciences, Laboratory of Mathematics and Applied Mathematics}, \orgname{Peking
University}, \orgaddress{\street{Beijing}, \postcode{100871}, \country{China}}}}
\author{Xiangcheng Zheng*
\address{\orgdiv{School of Mathematical Sciences}, \orgname{Peking
University}, \orgaddress{\street{Beijing}, \postcode{100871}, \country{China}}}}

\authormark{Zhang, Zhang, Zheng}

\corresp[*]{Corresponding author: \href{email:zhengxch@math.pku.edu.cn}{zhengxch@math.pku.edu.cn}}

\received{Date}{0}{Year}
\revised{Date}{0}{Year}
\accepted{Date}{0}{Year}


\abstract{We develop and analyze numerical discretization to the constrained high-index saddle dynamics, the dynamics searching for the high-index saddle points confined on the high-dimensional unit sphere. Compared with the saddle dynamics without constraints, the constrained high-index saddle dynamics has more complex dynamical forms, and additional operations such as the retraction and vector transport are required due to the constraint, which significantly complicate the numerical scheme and the corresponding numerical analysis. Furthermore, as the existing numerical analysis results usually depend on the index of the saddle points implicitly, the proved numerical accuracy may be reduced if the index is high in many applications, which indicates the lack of robustness with respect to the index. To address these issues, we derive the error estimates for numerical discretization of the constrained high-index saddle dynamics on high-dimensional sphere, and then improve it by providing an index-robust error analysis in an averaged norm by adjusting the relaxation parameters. The developed results provide mathematical supports for the accuracy of numerical computations.}
\keywords{saddle dynamics; saddle point; solution landscape; error estimate; index-robust.}


\maketitle

\section{Introduction}
 High-index saddle dynamics provides a powerful instrument in systematically finding the high-index saddle points and construction of the solution landscape \cite{Ben,EZho,YinPRL,YinSISC,YinSCM,ZhaSISC}. It has been applied to study various applications in physical and engineering problems \cite{HanXu,HanYin, han2021a, wang2021modeling,xu2021solution,YinPRL,yin2022solution, ZhangChe,ZhangCheDu,ZhaRen}. Here the index of saddle point refers to the Morse index characterized by the maximal dimension of a subspace on which its Hessian operator is negative definite \cite{Milnor}. If the additional nonlinear equality constraints exist, the constrained high-index saddle dynamics \cite{YinHua} was proposed for solving the constrained saddle point problems such as the Thomson problem \cite{Tho} and the Bose--Einstein Condensation \cite{bao2013mathematical}. However, due to the strong nonlinearities of the coupled equations in high-index saddle dynamics, rigorous numerical analysis are less considered compared with the growing numerical implementations in applications.  

There exist a couple of works investigating the convergence rate of the numerical saddle point to the target saddle point via sophisticated derivations \cite{All,CanLeg,Doye,EV2010,Farr,Gao,Gou,Hen2,Lev,Li,LiJi,Mehta,Qua,ZhaDu}.     
In contrast, conventional error analysis between the exact solutions of the continuous saddle dynamics and their numerical approximations in terms of the step size provides the dynamical convergence of numerical solutions to the saddle dynamics, which provides important physical information such as transition pathway and theoretically ensure the
accuracy of construction of the solution landscape \cite{HanYin,YinSCM,paper1}. In \cite{paper1}, a first-order numerical scheme for the high-index saddle dynamics without constraints was rigorously analyzed. However, due to the nonlinear equality constraint in the constrained high-index saddle dynamics, more complex dynamical form is encountered and additional operations in the numerical scheme such as the retraction and vector transport are required \cite{YinHua}, which significantly complicate the numerical scheme and the corresponding numerical analysis. Furthermore, the pointwise-in-time error estimate 
\begin{equation}\label{errpre}
\|x(t_n)-x_n\|+\sum_{i=1}^k\|v_i(t_n)-v_{i,n}\|\leq Q\tau
\end{equation}
was proved in \cite{paper1}, where $\{x_n,v_{1,n},\cdots,v_{k,n}\}$ serve as the numerical approximations of the state variable and corresponding eigenvectors $\{x(t),v_1(t),\cdots,v_k(t)\}$ at the time step $t_n$. However, the proof in \cite{paper1} indicates that the positive constant $Q$ depends on the index $k$ of saddle point. In some practical applications such as the Thomson problem, which considers the minimal-energy configuration of a group of classical charged particles confined on a sphere which interact with each other
via a Coulomb potential \cite{Sma,Tho}, the dimension of the energy function and the index $k$ of saddle point can be very high. In this case, the accuracy in (\ref{errpre}) could be reduced by the large factor $Q$.

Concerning the aforementioned issues, we aim to prove the error estimates of the numerical discretization to constrained high-index saddle dynamics on the high-dimensional unit sphere. The main difficulties we overcome lie in the complicated (nonlinear) forms of this dynamical system and the corresponding numerical scheme, which contains the retraction of the position $x$ and the transport and orthonormalization procedure of the vectors due to the sphere constraint. In particular, these features distinguishes the numerical analysis of saddle dynamics from those for ordinary differential equations. We then improve the analysis by developing an index-robust error estimate, i.e., the constant $Q$ in (\ref{errpre}) is independent from $k$, in an averaged norm by adjusting the relaxation parameters $\alpha$ and $\beta$. These results provide theoretical supports for the numerical accuracy of discretization of constrained high-index saddle dynamics and construction of solution landscapes for complex systems. 

The rest of the paper is organized as follows: In Section 2 we present formulations of the constrained high-index saddle dynamics and its numerical scheme. In Section 3 we prove auxiliary estimates, based on which we derive error estimates for the discretization of constrained high-index saddle dynamics in Section 4. Numerical experiments are performed in Section 5 to substantiate the theoretical findings. In Section 6, we prove index-robust error estimates for the numerical scheme in an averaged norm by adjusting the relaxation parameters. We finally address concluding remarks in the last section.

\section{Discretization of constrained high-index saddle dynamics}\label{sec2}
In this section we present the constrained high-index saddle dynamics on the high-dimensional unit sphere, as well as the corresponding numerical scheme.
\subsection{Formulation of constrained high-index saddle dynamics}
Given a $C^3$ function $E(x)$ with $x\in\mathbb R^d$ and define the corresponding natural force $F:\mathbb R^d\rightarrow \mathbb R^d$ and the negative Hessian $H\in \mathbb R^{d\times d}$ by $F(x)=-\nabla E(x)$ and $H(x)=-\nabla^2 E(x)$. It is clear that $H(x)=H(x)^\top$. Then the saddle dynamics for an index-$k$ saddle point of $E(x)$ with $1\leq k\in\mathbb N$ on the unit sphere $S^{d-1}$ reads \cite{YinHua}
\begin{equation}\label{sadk}
\left\{
\begin{array}{l}
\ds \frac{dx}{dt} =\alpha\bigg(I -xx^\top-2\sum_{j=1}^k v_jv_j^\top \bigg)F(x),\\[0.075in]
\ds \frac{dv_i}{dt}= \beta\bigg( I-xx^\top-v_iv_i^\top-2\sum_{j=1}^{i-1}v_jv_j^\top\bigg)H(x)v_i+\beta xv_i^\top F(x),~~1\leq i\leq k, 
\end{array}
\right.
\end{equation}
equipped with the initial conditions
$$x=X_0\in S^{d-1},~~v_i(0)=V_{i,0}\text{ such that }V_{i,0}^\top V_{j,0}=\delta_{ij},~~1\leq i,j\leq k. $$
Here $\alpha$ and $\beta$ are positive relaxation parameters. 
 In the original work \cite{YinHua}, these parameters are chosen as $\alpha=\beta=1$, and it was proved that the solutions $x$ and $\{v_i\}_{i=1}^k$ of (\ref{sadk}) satisfy
\begin{equation}\label{prop}
 x\in S^{d-1},~~v_i^\top x=0,~~v_{i}^\top v_{j}=\delta_{ij},~~1\leq i,j\leq k, t\geq 0,
 \end{equation} 
 and a linearly stable steady state of (\ref{sadk}) is an index-$k$ saddle point of $E(x)$.

Compared with the classical high-index saddle dynamics without constraints \cite{YinSISC},
\begin{equation}\label{sadsad}
\left\{
\begin{array}{l}
\ds \frac{dx}{dt} =\alpha\bigg(I -2\sum_{j=1}^k v_jv_j^\top \bigg)F(x),\\[0.075in]
\ds \frac{dv_i}{dt}= \beta\bigg( I-v_iv_i^\top-2\sum_{j=1}^{i-1}v_jv_j^\top\bigg)H(x)v_i,~~1\leq i\leq k,
\end{array}
\right.
\end{equation}
constrained high-index saddle dynamics (\ref{sadk}) exhibits a stronger coupling between $x(t)$ and $\{v_i(t)\}_{i=1}^k$ with several additional terms and has more properties (cf. (\ref{prop})) that we need to preserve in designing numerical methods, which distinguishes the current work from the existing numerical analysis for (\ref{sadsad}).

Throughout the paper we make the following regular assumptions on the force and the Hessian:

\noindent\textbf{Assumption A:} There exists constants $L_1,L_2\geq 0$ such that the following linearly growth and local Lipschitz conditions  
$$\begin{array}{c}
\ds \|H(x_2)-H(x_1)\|+\|F(x_2)-F(x_1)\|\leq L_1\|x_2-x_1\|,\\[0.1in]
\ds\|F(x)\|\leq L_1(1+\|x\|),~~x,x_1,x_2\in S^{d-1},
\end{array}  $$
as well as the boundedness of $F$ and $H$ on $S^{d-1}$
$$\max_{x\in S^{d-1}}\big(\|F(x)\|+\|H(x)\|\big)\leq L_2 $$
hold. Here $\|\cdot\|$ refers to the standard $l^2$ norm of a vector or a matrix.

\subsection{Numerical scheme}
We consider the numerical approximation of the constrained index-$k$ saddle dynamics (\ref{sadk}) for some $k\geq 1$ on the time interval $(0,T]$ for some $T>0$. Define a uniform partition $\{t_n\}_{n=0}^N$ of $[0,T]$ with the mesh size $\tau$. We discretize the first-order derivative by the explicit Euler scheme to get the reference equations for constrained index-$k$ saddle dynamics (\ref{sadk})  
\begin{equation}\label{Refk}
\left\{
\begin{array}{l}
\ds x(t_{n}) =x(t_{n-1})+\tau\alpha\bigg(I -x(t_{n-1})x(t_{n-1})^\top \\
\ds\hspace{0.7in}-2\sum_{j=1}^k v_j(t_{n-1})v_j(t_{n-1})^\top \bigg)F(x(t_{n-1}))+O(\tau^2),\\[0.075in]
\ds v_i(t_{n})=v_i(t_{n-1})+\tau\beta\bigg( I -x(t_{n-1})x(t_{n-1})^\top-v_i(t_{n-1})v_i(t_{n-1})^\top\\
\ds\hspace{0.5in}-2\sum_{j=1}^{i-1}v_j(t_{n-1})v_j(t_{n-1})^\top\bigg)H(x(t_{n-1}))v_i(t_{n-1})\\[0.2in]
\ds\hspace{0.7in}+\tau\beta x(t_{n-1})v_i(t_{n-1})^\top F(x(t_{n-1}))+O(\tau^2),~~1\leq i\leq k.
\end{array}
\right.
\end{equation}
Then we drop the truncation errors to obtain a first-order scheme of (\ref{sadk}) 
\begin{equation}\label{FDsadk}
\left\{
\begin{array}{l}
\ds \tilde x_{n} =x_{n-1}+\tau\alpha\bigg(I -x_{n-1}x_{n-1}^\top-2\sum_{j=1}^k v_{j,n-1}v_{j,n-1}^\top \bigg)F(x_{n-1}),\\
\ds x_n=\frac{\tilde x_n}{\|\tilde x_n\|},\\
\ds \tilde v_{i,n}=v_{i,n-1}+\tau\beta\bigg( I-x_{n-1}x_{n-1}^\top-v_{i,n-1}v_{i,n-1}^\top\\[0.15in]
\ds\hspace{1.2in}-2\sum_{j=1}^{i-1}v_{j,n-1}v_{j,n-1}^\top\bigg)H(x_{n-1})v_{i,n-1}\\[0.2in]
\ds\hspace{1.5in}+\tau\beta x_{n-1}v_{i,n-1}^\top F(x_{n-1}),~~1\leq i\leq k,\\[0.1in]
\hat v_{i,n}=\tilde v_{i,n}-\tilde v_{i,n}^\top x_n x_n,~~1\leq i\leq k,\\[0.05in]
\ds  v_{i,n}=\frac{1}{Y_{i,n}}\bigg(\ds\hat v_{i,n}-\sum_{j=1}^{i-1}(\hat v_{i,n}^\top v_{j,n})v_{j,n}\bigg),~~1\leq i\leq k
\end{array}
\right.
\end{equation}
for $1\leq n\leq N$ and
$$ x_0=X_0,~~v_{i,0}=V_{i,0},~~  Y_{i,n}:=\bigg(\|\hat v_{i,n}\|^2-\sum_{j=1}^{i-1}(\hat v_{i,n}^\top v_{j,n})^2\bigg)^{1/2},~~1\leq i\leq k.  $$
 
The second equation of (\ref{FDsadk}) represents the retraction in order to ensure that $x_n\in S^{d-1}$. The last two equations of (\ref{FDsadk}), which stand for the vector transport and the Gram-Schmidt orthonormalization procedure \cite{YinHua}, respectively, aim to ensure the last two properties of (\ref{prop}), that is,
\begin{equation}\label{propn}
v_{i,n}^\top x_n=0,~~v_{i,n}^\top v_{j,n}=\delta_{ij},~~1\leq i,j\leq k,~~0\leq n\leq N.
\end{equation}
Compared with the numerical schemes for index-$k$ saddle dynamics without constraints, see e.g. \cite{paper1}, the additional operations like the retraction and vector transport in (\ref{FDsadk}) caused from the sphere constraint make this scheme more complex and intensify the coupling of $x$ and $\{v_i\}_{i=1}^k$ that will significantly complicate the numerical analysis in subsequent sections.

\section{Auxiliary estimates}
We first prove several auxiliary estimates to support the error estimates.
\begin{lemma}\label{lem0k}
Under the Assumption A, the following estimate holds for $1\leq n\leq N$
$$\|x_n-\tilde x_n\|=\big|1-\|\tilde x_n\|\big|\leq 2\alpha^2 L_2^2 \tau^2. $$
\end{lemma}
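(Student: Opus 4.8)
The plan is to work directly from the retraction step $x_n=\tilde x_n/\|\tilde x_n\|$ in (\ref{FDsadk}) together with the orthonormality relations (\ref{propn}) at the previous step. The first equality is elementary and does not use any structure of the dynamics: since $\tilde x_n=\|\tilde x_n\|\,x_n$, we have $x_n-\tilde x_n=(1-\|\tilde x_n\|)x_n$, and $\|x_n\|=1$ gives $\|x_n-\tilde x_n\|=\big|1-\|\tilde x_n\|\big|$. So the task reduces to estimating $\big|1-\|\tilde x_n\|\big|$.

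For this, write $\tilde x_n=x_{n-1}+\tau\alpha\,w_{n-1}$ with $w_{n-1}:=\big(I-x_{n-1}x_{n-1}^\top-2\sum_{j=1}^k v_{j,n-1}v_{j,n-1}^\top\big)F(x_{n-1})$. The key observation is that $w_{n-1}$ is orthogonal to $x_{n-1}$: by (\ref{propn}) we have $\|x_{n-1}\|=1$ and $x_{n-1}^\top v_{j,n-1}=0$ for each $j$, hence $x_{n-1}^\top\big(I-x_{n-1}x_{n-1}^\top-2\sum_j v_{j,n-1}v_{j,n-1}^\top\big)=0$ and therefore $x_{n-1}^\top w_{n-1}=0$. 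Expanding $\|\tilde x_n\|^2$ then kills the cross term, giving $\|\tilde x_n\|^2=\|x_{n-1}\|^2+\tau^2\alpha^2\|w_{n-1}\|^2=1+\tau^2\alpha^2\|w_{n-1}\|^2\ge 1$, so that $\big|1-\|\tilde x_n\|\big|=\|\tilde x_n\|-1=\sqrt{1+\tau^2\alpha^2\|w_{n-1}\|^2}-1\le \frac{1}{2}\tau^2\alpha^2\|w_{n-1}\|^2$, using $\sqrt{1+a}-1\le a/2$ for $a\ge 0$. To bound $\|w_{n-1}\|$, note that since $\{x_{n-1},v_{1,n-1},\dots,v_{k,n-1}\}$ is an orthonormal set by (\ref{propn}), the symmetric matrix $I-x_{n-1}x_{n-1}^\top-2\sum_j v_{j,n-1}v_{j,n-1}^\top$ has eigenvalue $0$ on $\mathrm{span}\{x_{n-1}\}$, eigenvalue $-1$ on $\mathrm{span}\{v_{j,n-1}\}_{j=1}^k$, and eigenvalue $1$ on the orthogonal complement, so its operator norm is at most $1$; with Assumption~A and $x_{n-1}\in S^{d-1}$ (again from (\ref{propn})) this yields $\|w_{n-1}\|\le\|F(x_{n-1})\|\le L_2$. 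Substituting back gives $\big|1-\|\tilde x_n\|\big|\le \frac{1}{2}\alpha^2 L_2^2\tau^2\le 2\alpha^2 L_2^2\tau^2$, which is the claim.

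The argument is short, and I expect the only delicate point to be the estimate of the operator norm of $I-x_{n-1}x_{n-1}^\top-2\sum_j v_{j,n-1}v_{j,n-1}^\top$: a naive triangle-inequality bound would produce a factor growing with the index $k$, whereas exploiting the orthonormality of $\{x_{n-1},v_{j,n-1}\}$ keeps it equal to $1$ and hence the final constant independent of $k$ (consistent with the index-robust viewpoint of the paper). One should also keep in mind that the whole estimate invokes (\ref{propn}) at step $n-1$, which is guaranteed by the retraction, vector-transport and Gram--Schmidt steps of (\ref{FDsadk}) and can be established by a preliminary induction on $n$ if not taken as given.
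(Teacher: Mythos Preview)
Your proof is correct and in fact slightly sharper than the paper's. Both arguments rest on the same structural observation---the increment $\tau\alpha\,w_{n-1}$ is orthogonal to $x_{n-1}$---but the paper reaches the estimate more circuitously: it first shows $x_{n-1}^\top\tilde x_n=1$, then multiplies the update by $\tilde x_n^\top$ to write $\|\tilde x_n\|^2-1=\tau\alpha\,(\tilde x_n-x_{n-1})^\top\big(I-2\sum_j v_{j,n-1}v_{j,n-1}^\top\big)F(x_{n-1})$, bounds this by $\tau\alpha L_2\|\tilde x_n-x_{n-1}\|$ via the norm-preserving property of the reflector $I-2\sum_j v_{j,n-1}v_{j,n-1}^\top$, separately bounds $\|\tilde x_n-x_{n-1}\|\le 2\tau\alpha L_2$, and finally uses $\big|\|\tilde x_n\|-1\big|\le\big|\|\tilde x_n\|^2-1\big|$. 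Your Pythagorean expansion $\|\tilde x_n\|^2=1+\tau^2\alpha^2\|w_{n-1}\|^2$, combined with the spectral observation that $I-x_{n-1}x_{n-1}^\top-2\sum_j v_{j,n-1}v_{j,n-1}^\top$ has operator norm exactly $1$ and the elementary inequality $\sqrt{1+a}-1\le a/2$, is cleaner and yields the constant $\tfrac12$ rather than $2$. Your closing remarks about avoiding a $k$-dependent constant via the spectral bound (rather than a naive triangle inequality) are also on point and mirror the paper's concern.
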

\begin{proof}. We multiply $x_{n-1}^\top$ on both sides of the first equation of (\ref{FDsadk}) and use (\ref{propn}) to obtain
$$x_{n-1}^\top\tilde x_n=1. $$
We then multiply $\tilde x_{n}^\top$ on both sides of the first equation of (\ref{FDsadk}) and use $x_{n-1}^\top v_{j,n-1}=0$ for $1\leq j\leq k$ and the above equation to obtain
$$\|\tilde x_{n}\|^2=1+\tau\alpha\bigg(\tilde x_n^\top-x_{n-1}^\top -2\sum_{j=1}^k(\tilde x_n^\top-x_{n-1}^\top)v_{j,n-1}v_{j,n-1}^\top \bigg)F(x_{n-1}), $$
which, together with the Assumption A and the norm-preserving property of 
\begin{equation}\label{proj}
I-2\sum_{j=1}^kv_{j,n-1}v_{j,n-1}^\top ,
\end{equation}
 yields
$$\big|\|\tilde x_n\|^2-1\big|\leq \tau\alpha L_2\|\tilde x_n-x_{n-1}\|. $$
Similarly, by the first equation of (\ref{FDsadk}) again we have
$$\|\tilde x_n-x_{n-1}\|=\bigg\|\tau\alpha\bigg(I -x_{n-1}x_{n-1}^\top-2\sum_{j=1}^k v_{j,n-1}v_{j,n-1}^\top \bigg)F(x_{n-1})\bigg\|\leq 2\tau\alpha L_2. $$
Combining the above two equations yields
$$\big|\|\tilde x_n\|-1\big|\leq  \big|\|\tilde x_n\|^2-1\big|\leq 2\alpha^2 L_2^2 \tau^2,  $$
 and we incorporate this with 
$$\|x_n-\tilde x_n\|=\bigg\|\frac{\tilde x_n}{\|\tilde x_n\|}(1-\|\tilde x_n\|)\bigg\|= \big|1-\|\tilde x_n\|\big| $$
  to complete the proof.
\end{proof}
 \begin{lemma}\label{lem1k}
 Under the Assumption A, the following estimates hold for $1\leq n\leq N$ 
 $$\begin{array}{c}
 \ds \big|\tilde v_{m,n}^\top \tilde v_{i,n}\big|\leq Q\beta^2\tau^2,~~1\leq m<i\leq k;\\[0.1in]
 \ds  \big|\|\tilde v_{i,n}\|^2-1\big|\leq Q\beta^2\tau^2,~~1\leq i\leq k.
\end{array}  $$
Here the positive constant $Q$ is independent from $n$, $N$, $\tau$, $k$, $\alpha$ and $\beta$.
 \end{lemma}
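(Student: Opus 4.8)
The plan is to estimate the two quantities by testing the recursion for $\tilde v_{i,n}$ in (\ref{FDsadk}) against the appropriate vectors from the previous step, exactly in the spirit of Lemma \ref{lem0k}. Write the second equation of (\ref{FDsadk}) compactly as
\begin{equation*}
\tilde v_{i,n}=v_{i,n-1}+\tau\beta\,P_{i,n-1}H(x_{n-1})v_{i,n-1}+\tau\beta\, x_{n-1}v_{i,n-1}^\top F(x_{n-1}),
\end{equation*}
where $P_{i,n-1}:=I-x_{n-1}x_{n-1}^\top-v_{i,n-1}v_{i,n-1}^\top-2\sum_{j=1}^{i-1}v_{j,n-1}v_{j,n-1}^\top$. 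Using the orthonormality relations (\ref{propn}) at step $n-1$ and the uniform bound $\|H\|,\|F\|\le L_2$ from Assumption A, each of the two $O(\tau)$ correction terms has norm at most $3\tau\beta L_2$ (a crude but $k$-independent bound, since $P_{i,n-1}$ is a difference of orthogonal projections and $v_{i,n-1}^\top F(x_{n-1})$ is a scalar bounded by $L_2$), so $\|\tilde v_{i,n}-v_{i,n-1}\|\le Q\beta\tau$ with $Q$ independent of $k,n,N,\alpha,\beta$.

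For the cross term with $1\le m<i\le k$, I would compute $\tilde v_{m,n}^\top\tilde v_{i,n}$ by expanding the product of the two recursions. The leading term $v_{m,n-1}^\top v_{i,n-1}$ vanishes by (\ref{propn}). The two genuinely linear-in-$\tau$ terms are $\tau\beta\, v_{m,n-1}^\top P_{i,n-1}H(x_{n-1})v_{i,n-1}$ and $\tau\beta\, v_{m,n-1}^\top x_{n-1}\,(v_{i,n-1}^\top F(x_{n-1}))$ from the $i$-recursion, plus the two symmetric ones from the $m$-recursion. The second vanishes because $v_{m,n-1}^\top x_{n-1}=0$; the symmetric pair from the $m$-recursion vanishes likewise after noting $x_{n-1}^\top v_{i,n-1}=0$. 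The key cancellation is that $m<i$ forces $v_{m,n-1}v_{m,n-1}^\top$ to appear in the sum defining $P_{i,n-1}$, so $v_{m,n-1}^\top P_{i,n-1}=v_{m,n-1}^\top\big(I-2v_{m,n-1}v_{m,n-1}^\top-\cdots\big)=-v_{m,n-1}^\top+(\text{terms killed by orthonormality})=-v_{m,n-1}^\top$; combined with the corresponding structure in the $m$-recursion, the two surviving first-order contributions are $-\tau\beta\,v_{m,n-1}^\top H(x_{n-1})v_{i,n-1}$ and $-\tau\beta\,v_{i,n-1}^\top H(x_{n-1})v_{m,n-1}$, which are equal (since $H=H^\top$) and opposite in sign only if I have tracked signs correctly — so what actually remains is that the first-order part is $O(\tau)$ but \emph{not} obviously zero, and I must instead argue that it cancels against a piece of the quadratic-in-$\tau$ remainder. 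The clean way is: all terms are either (i) killed by the orthonormality (\ref{propn}) at step $n-1$, or (ii) genuinely $O(\tau^2)$, bounded using $\|\tilde v_{i,n}-v_{i,n-1}\|\le Q\beta\tau$ together with $\|H\|,\|F\|\le L_2$; the symmetry $H=H^\top$ is what makes the two first-order cross contributions cancel exactly, leaving only the $\tau^2$ quadratic remainder $\le Q\beta^2\tau^2$.

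For the normalization estimate, I test the $\tilde v_{i,n}$-recursion against $\tilde v_{i,n}^\top$ (mirroring the $\|\tilde x_n\|^2$ computation in Lemma \ref{lem0k}): $\|\tilde v_{i,n}\|^2=v_{i,n-1}^\top\tilde v_{i,n}+\tau\beta\,(\tilde v_{i,n}-v_{i,n-1})^\top P_{i,n-1}H v_{i,n-1}+\tau\beta\,(\tilde v_{i,n}-v_{i,n-1})^\top x_{n-1}(v_{i,n-1}^\top F)$. Here $v_{i,n-1}^\top\tilde v_{i,n}=\|v_{i,n-1}\|^2+\tau\beta\,v_{i,n-1}^\top P_{i,n-1}Hv_{i,n-1}+0=1+0$, because $v_{i,n-1}^\top P_{i,n-1}=v_{i,n-1}^\top(I-v_{i,n-1}v_{i,n-1}^\top-\cdots)=0$ by orthonormality, and $v_{i,n-1}^\top x_{n-1}=0$. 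The remaining two terms are each $O(\tau^2)$ by $\|\tilde v_{i,n}-v_{i,n-1}\|\le Q\beta\tau$ and Assumption A, giving $\big|\|\tilde v_{i,n}\|^2-1\big|\le Q\beta^2\tau^2$ with $Q$ independent of $k,n,N,\alpha,\beta$.

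The main obstacle is bookkeeping rather than depth: I must expand the bilinear form $\tilde v_{m,n}^\top\tilde v_{i,n}$ carefully, correctly identify which projector-difference terms $v_{m,n-1}^\top P_{i,n-1}$ and $x_{n-1}^\top P_{i,n-1}$ collapse under (\ref{propn}) — using crucially that $m<i$ so that $v_m$ sits inside the sum defining $P_{i,n-1}$ — and verify that the two surviving first-order terms cancel by symmetry of $H$, so that only the quadratic-in-$\tau$ remainder survives; throughout, every bound must be taken in a form (scalars bounded by $L_2$, projector differences bounded by a fixed constant) that does not accumulate a factor of $k$, which is exactly what the statement's "$Q$ independent of $k$" requires.
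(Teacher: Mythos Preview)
Your overall approach is the same as the paper's: expand $\tilde v_{m,n}^\top\tilde v_{i,n}$ and $\|\tilde v_{i,n}\|^2$ using the recursion, kill the zeroth- and first-order terms with the orthonormality relations (\ref{propn}) and the symmetry of $H$, and bound the $O(\tau^2)$ remainder $k$-independently. Your treatment of $\|\tilde v_{i,n}\|^2-1$ is in fact slightly cleaner than the paper's, since you bound $\|\tilde v_{i,n}-v_{i,n-1}\|\le Q\beta\tau$ directly from $\|P_{i,n-1}\|\le 1$ rather than through the quadratic inequality used in the paper.

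The one genuine gap is the sign-tracking in the cross term. The two surviving first-order contributions do \emph{not} carry the same sign. From the $i$-recursion you correctly get $v_{m,n-1}^\top P_{i,n-1}=-v_{m,n-1}^\top$ (because $m<i$ puts $v_{m,n-1}$ inside the double sum), giving $-\tau\beta\,v_{m,n-1}^\top H v_{i,n-1}$. From the $m$-recursion the relevant factor is $P_{m,n-1}^\top v_{i,n-1}$; since $i>m$, the vector $v_{i,n-1}$ is orthogonal to $x_{n-1},v_{m,n-1}$ and to every $v_{j,n-1}$ with $j<m$, so $P_{m,n-1}^\top v_{i,n-1}=+v_{i,n-1}$, giving $+\tau\beta\,v_{m,n-1}^\top H^\top v_{i,n-1}$. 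These cancel exactly by $H=H^\top$, and this is the whole story at order $\tau$. Your tentative idea that the first-order residue might instead cancel against a piece of the quadratic remainder is incorrect and should be discarded; the first-order part is identically zero, and only the genuinely $O(\beta^2\tau^2)$ terms survive, bounded $k$-independently by the projector argument you outline.
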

\begin{proof}. To prove the first estimate, we apply the properties in (\ref{propn}) to calculate the product $\tilde v_{m,n}^\top \tilde v_{i,n}$ for $1\leq m< i\leq k$
\begin{equation*}
\begin{array}{l}
\ds \tilde v_{m,n}^\top \tilde v_{i,n}=\tau\beta\big(v_{m,n-1}^\top H(x_{n-1})v_{i,n-1}-2v_{m,n-1}^\top H(x_{n-1})v_{i,n-1}\\[0.05in]
\ds\qquad\qquad\qquad\qquad+v_{m,n-1}^\top H(x_{n-1})^\top v_{i,n-1}\big)+\tau^2\beta^2(\cdots)=\tau^2\beta^2(\cdots)
\end{array}
\end{equation*}
where we used the fact that first-order term is exactly zero by the symmetry of $H$. In order to specify the dependence of the estimates of the terms in $(\cdots)$ on $k$,
we observe that the only factor in the scheme of $\tilde v_{i,n}$ (or $\tilde v_{m,i}$) in (\ref{FDsadk}) that relates to $k$ is 
$$\sum_{j=1}^{i-1}v_{j,n-1}v_{j,n-1}^\top H(x_{n-1})v_{i,n-1}, $$
which could be considered as a projection of $H(x_{n-1})v_{i,n-1}$ with the norm less or equal to $\|H(x_{n-1})v_{i,n-1}\|\leq L_2$. By this means, the index $k$ is absorbed 
and the estimate of $(\cdots)$ is independent from $k$, which leads to 
\begin{equation}
\big|\tilde v_{m,n}^\top \tilde v_{i,n}\big|\leq Q\beta^2\tau^2,~~1\leq m<i\leq k.
\end{equation}

To estimate $\|\tilde v_{i,n}\|$, we multiply $v^\top_{i,n-1}$ on both sides of the third equation of (\ref{FDsadk}) and apply (\ref{propn}) to obtain for $1\leq i\leq k$
\begin{equation}\label{mh2k}
\ds v^\top_{i,n-1}\tilde v_{i,n}=1.
\end{equation}
We then multiply $x_{n-1}$ on both sides of the third equation of (\ref{FDsadk}) and apply (\ref{propn}) to obtain for $1\leq i\leq k$
\begin{equation}\label{xv}
 x_{n-1}^\top\tilde v_{i,n}=\tau\beta v_{i,n-1}^\top F(x_{n-1}),
 \end{equation}
 which implies
\begin{equation}\label{wh}
\tilde v_{i,n}^\top x_{n-1}x_{n-1}^\top=\tau\beta v_{i,n-1}^\top F(x_{n-1})x_{n-1}^\top.  \end{equation}
We finally multiply $\tilde v_{i,n}^\top$ on both sides of the third equation of (\ref{FDsadk}) and apply (\ref{propn}), (\ref{mh2k}), (\ref{xv}) and (\ref{wh}) to obtain
\begin{equation*}
\begin{array}{l}
\ds \tilde v_{i,n}^\top\tilde v_{i,n}=1+\tau\beta\bigg( \tilde v_{i,n}^\top-\tau\beta v_{i,n-1}^\top F(x_{n-1}) x_{n-1}^\top-v_{i,n-1}^\top\\[0.15in]
\ds\hspace{0.6in}-2\sum_{j=1}^{i-1}(\tilde v_{i,n}^\top-v_{i,n-1}^\top)v_{j,n-1}v_{j,n-1}^\top\bigg)H(x_{n-1})v_{i,n-1}\\[0.2in]
\ds\hspace{0.6in}+\tau^2\beta^2 \big(v_{i,n-1}^\top F(x_{n-1}) \big)^2,
\end{array}
\end{equation*}
which, together with
\begin{equation}\label{vv}
\|\tilde v_{i,n}-v_{i,n-1}\|^2=\tilde v_{i,n}^\top\tilde v_{i,n}-2\tilde v_{i,n}^\top v_{i,n-1}+1=\tilde v_{i,n}^\top\tilde v_{i,n}-1,
\end{equation}  
 implies
\begin{equation}\label{vvt} \|\tilde v_{i,n}-v_{i,n-1}\|^2\leq Q\tau\beta \|\tilde v_{i,n}-v_{i,n-1}\|+Q\tau^2\beta^2. 
\end{equation}
As this equation leads to 
$$\|\tilde v_{i,n}-v_{i,n-1}\|\leq \frac{Q\tau\beta+\sqrt{Q^2\tau^2\beta^2+4Q\tau^2\beta^2}}{2}\leq Q\beta\tau,$$
we incorporate this with (\ref{vv}) and (\ref{vvt}) to find that
$$\big|\tilde v_{i,n}^\top\tilde v_{i,n}-1\big|\leq Q\beta^2\tau^2,$$
which completes the proof.
\end{proof}

\begin{lemma}\label{lem15k}
 Suppose the Assumption A holds, $\alpha=\beta$ and $\sqrt{2}\beta L_2 \tau\leq 1-\theta$ for some $0<\theta<1$, then the following estimates hold for $1\leq n\leq N$ 
 $$\begin{array}{c}
\ds \|\hat v_{i,n}-\tilde v_{i,n}\|\leq Q\beta^2\tau^2,~~1\leq i\leq k;\\[0.1in]
 \ds \big|\hat v_{m,n}^\top \hat v_{i,n}\big|\leq Q(\beta^2+\beta^4)\tau^2,~~1\leq m<i\leq k;\\[0.1in]
 \ds  \big|\|\hat v_{i,n}\|^2-1\big|\leq Q(\beta^2+\beta^4)\tau^2,~~1\leq i\leq k.
\end{array}  $$
Here the positive constant $Q$ is independent from $n$, $N$, $\tau$, $k$, $\alpha$ and $\beta$.
 \end{lemma}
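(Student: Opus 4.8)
The plan is to reduce the whole statement to the first estimate, since the other two then follow by short algebra together with Lemma~\ref{lem1k}. For the first estimate, observe that the vector transport step in (\ref{FDsadk}) gives $\hat v_{i,n}-\tilde v_{i,n}=-(\tilde v_{i,n}^\top x_n)\,x_n$, so that $\|\hat v_{i,n}-\tilde v_{i,n}\|=|\tilde v_{i,n}^\top x_n|$ because $\|x_n\|=1$. Here the standing hypotheses enter: with $\alpha=\beta$, Lemma~\ref{lem0k} yields $\big|\|\tilde x_n\|-1\big|\le 2\beta^2L_2^2\tau^2=(\sqrt2\,\beta L_2\tau)^2\le(1-\theta)^2<1$, so $\tilde x_n\neq0$, the retraction $x_n=\tilde x_n/\|\tilde x_n\|$ is well defined with $\|x_n\|=1$, and $\|\tilde x_n\|\ge\theta(2-\theta)>0$. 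Consequently $|\tilde v_{i,n}^\top x_n|=|\tilde v_{i,n}^\top\tilde x_n|/\|\tilde x_n\|\le Q\,|\tilde v_{i,n}^\top\tilde x_n|$, and it suffices to bound $|\tilde v_{i,n}^\top\tilde x_n|$ by $Q\beta^2\tau^2$.

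To estimate $\tilde v_{i,n}^\top\tilde x_n$ I would split $\tilde x_n=x_{n-1}+(\tilde x_n-x_{n-1})$. The term $\tilde v_{i,n}^\top x_{n-1}$ is evaluated exactly in (\ref{xv}): it equals $\tau\beta\,v_{i,n-1}^\top F(x_{n-1})$. For $\tilde v_{i,n}^\top(\tilde x_n-x_{n-1})$, substitute the first equation of (\ref{FDsadk}) and write $\tilde v_{i,n}=v_{i,n-1}+(\tilde v_{i,n}-v_{i,n-1})$, where $\|\tilde v_{i,n}-v_{i,n-1}\|\le Q\beta\tau$ is exactly the bound produced inside the proof of Lemma~\ref{lem1k}. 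The orthonormality relations (\ref{propn}), namely $v_{i,n-1}^\top x_{n-1}=0$ and $v_{i,n-1}^\top v_{j,n-1}=\delta_{ij}$, collapse the main contribution to
$$v_{i,n-1}^\top\Big(I-x_{n-1}x_{n-1}^\top-2\sum_{j=1}^k v_{j,n-1}v_{j,n-1}^\top\Big)F(x_{n-1})=-v_{i,n-1}^\top F(x_{n-1}),$$
while the cross term is bounded by $\|\tilde v_{i,n}-v_{i,n-1}\|$ times the norm of $\big(I-x_{n-1}x_{n-1}^\top-2\sum_{j=1}^k v_{j,n-1}v_{j,n-1}^\top\big)F(x_{n-1})$, which is at most $L_2$ and, crucially, independent of $k$ (the $k$-dependent sum is a scaled orthogonal projection of $F(x_{n-1})$). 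Hence $\tilde v_{i,n}^\top(\tilde x_n-x_{n-1})=-\tau\alpha\,v_{i,n-1}^\top F(x_{n-1})+O(\beta^2\tau^2)$. Adding the two pieces and using $\alpha=\beta$, the $O(\tau)$ terms $\tau\beta\,v_{i,n-1}^\top F(x_{n-1})$ and $-\tau\alpha\,v_{i,n-1}^\top F(x_{n-1})$ cancel exactly, leaving $|\tilde v_{i,n}^\top\tilde x_n|\le Q\beta^2\tau^2$, which is the first estimate.

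Granting this, the second and third estimates are immediate. Expanding $\hat v_{m,n}=\tilde v_{m,n}-(\tilde v_{m,n}^\top x_n)x_n$ and $\hat v_{i,n}=\tilde v_{i,n}-(\tilde v_{i,n}^\top x_n)x_n$ and using $\|x_n\|=1$ gives $\hat v_{m,n}^\top\hat v_{i,n}=\tilde v_{m,n}^\top\tilde v_{i,n}-(\tilde v_{m,n}^\top x_n)(\tilde v_{i,n}^\top x_n)$ and $\|\hat v_{i,n}\|^2=\|\tilde v_{i,n}\|^2-(\tilde v_{i,n}^\top x_n)^2$. Lemma~\ref{lem1k} bounds the first summand in each identity by $Q\beta^2\tau^2$, while the first estimate bounds the subtracted term by $Q\beta^4\tau^4\le Q\beta^4\tau^2$ (since $\tau$ is bounded); combining these gives the stated $Q(\beta^2+\beta^4)\tau^2$ bounds. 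The constant $Q$ stays independent of $k$ for the same reason as in Lemmas~\ref{lem0k} and~\ref{lem1k}: every sum $\sum_j v_{j,n-1}v_{j,n-1}^\top(\cdot)$ is a scaled orthogonal projection of a vector of norm at most $L_2$, so the factor $k$ is absorbed.

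I expect the main obstacle to be the first estimate — specifically, correctly isolating the two leading $O(\tau)$ contributions to $\tilde v_{i,n}^\top x_n$ and verifying that they cancel precisely when $\alpha=\beta$. Were this cancellation to fail one would only obtain $\|\hat v_{i,n}-\tilde v_{i,n}\|=O(\tau)$, which is too weak to carry first-order accuracy through the subsequent Gram--Schmidt normalization. The delicate point is the bookkeeping: the piece involving $\tilde v_{i,n}-v_{i,n-1}$ must be handled through the already-established $O(\beta\tau)$ bound rather than by naive triangle inequalities, and every $k$-dependent sum must be kept packaged as a projection so that no factor of $k$ leaks into $Q$.
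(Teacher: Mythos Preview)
Your proposal is correct and follows essentially the same route as the paper: reduce to bounding $|\tilde v_{i,n}^\top x_n|$, show the $O(\tau)$ contributions cancel precisely when $\alpha=\beta$, control $\|\tilde x_n\|^{-1}$ via Lemma~\ref{lem0k} and the hypothesis $\sqrt{2}\beta L_2\tau\le 1-\theta$, and then deduce the second and third estimates from the identity $\hat v_{m,n}^\top\hat v_{i,n}=\tilde v_{m,n}^\top\tilde v_{i,n}-(\tilde v_{m,n}^\top x_n)(\tilde v_{i,n}^\top x_n)$ together with Lemma~\ref{lem1k}. The only cosmetic difference is that you split $\tilde x_n=x_{n-1}+(\tilde x_n-x_{n-1})$ and invoke the intermediate bound $\|\tilde v_{i,n}-v_{i,n-1}\|\le Q\beta\tau$ from the proof of Lemma~\ref{lem1k}, whereas the paper expands $\tilde v_{i,n}^\top\tilde x_n$ directly and reads off the first-order coefficient $(\beta-\alpha)v_{i,n-1}^\top F(x_{n-1})$; both arrive at the same cancellation and the same $k$-independent constant.
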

 \begin{proof}.
 By the fourth equation of (\ref{FDsadk}) we have
 $$\|\hat v_{i,n}-\tilde v_{i,n}\|=\big|\tilde v_{i,n}^\top x_n\big|. $$
 By the first and the third equations in (\ref{FDsadk}), direct calculations show that
 $$\begin{array}{l}
 \ds\big|\tilde v_{i,n}^\top x_n\big|=\frac{1}{\|\tilde x_n\|}\big|\tau\big(\beta v_{i,n-1}^\top F(x_{n-1})+\alpha F(x_{n-1})^\top v_{i,n-1}\\[0.15in]
 \ds\qquad\qquad\qquad-2\alpha v_{i,n-1}^\top F(x_{n-1})\big)+\tau^2\alpha\beta(\cdots)\big|.
\end{array}   $$
Note that the first-order term in this equation is exact zero if $\alpha=\beta$. By Lemma \ref{lem0k} we have 
$$\frac{1}{\|\tilde x_n\|}\leq \frac{1}{1-2\alpha^2 L_2^2 \tau^2}\leq \frac{1}{1-(1-\theta)^2}\leq \frac{1}{\theta}.$$ Thus we obtain from the above equations that
\begin{equation} \label{v-v}
\|\hat v_{i,n}-\tilde v_{i,n}\|=\big|\tilde v_{i,n}^\top x_n\big|\leq Q\beta^2\tau^2. 
\end{equation}
 
 For $1\leq m\leq i\leq k$ we get
 $$\hat v_{m,n}^\top \hat v_{i,n}=\tilde v_{m,n}^\top \tilde v_{i,n}-x_n^\top \tilde v_{i,n}x_n^\top \tilde v_{m,n}. $$
 If $m<i$, we apply Lemma \ref{lem1k} and (\ref{v-v}) to obtain
 $$\big|\hat v_{m,n}^\top \hat v_{i,n}\big|\leq Q(\beta^2+\beta^4)\tau^2.$$
 If $m=i$ we again employ Lemma \ref{lem1k} and (\ref{v-v}) to get
$$\big|\|\hat v_{i,n}\|^2-1\big|=\big|\| \tilde v_{i,n}\|^2-1-(x_n^\top \tilde v_{i,n})^2\big|\leq Q(\beta^2+\beta^4)\tau^2. $$ 
Thus we complete the proof.
 \end{proof}
\begin{lemma}\label{lem2k}
Suppose the Assumption A holds, $\alpha=\beta$ and $\sqrt{2}\beta L_2 \tau\leq 1-\theta$ for some $0<\theta<1$, the following estimate holds for $1\leq n\leq N$ and $\tau$ sufficiently small 
\begin{equation}\label{lem2keq}
 \|v_{i,n}-\hat v_{i,n}\|\leq Q\tau^2,~~1\leq i\leq k.
 \end{equation}
Here the positive constant $Q$ is independent from $n$, $N$ and $\tau$ but may depend on $k$, $\alpha$ and $\beta$. 
\end{lemma}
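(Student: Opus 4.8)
The plan is to argue by induction on the vector index $i$, exploiting the Gram--Schmidt formula in the last line of (\ref{FDsadk}) together with the three estimates already provided by Lemma \ref{lem15k}. For the base case $i=1$ the Gram--Schmidt sum is empty, so $v_{1,n}=\hat v_{1,n}/\|\hat v_{1,n}\|$ and hence $\|v_{1,n}-\hat v_{1,n}\|=\big|1-\|\hat v_{1,n}\|\big|\leq\big|\|\hat v_{1,n}\|^2-1\big|\leq Q\tau^2$ by Lemma \ref{lem15k}, where I use that $\|\hat v_{1,n}\|\geq 1/2$ once $\tau$ is small enough.

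For the inductive step I assume (\ref{lem2keq}) holds for every $j<i$. The crucial preliminary is to show that the off-diagonal inner products satisfy $\big|\hat v_{i,n}^\top v_{j,n}\big|\leq Q\tau^2$ for each $j<i$. To obtain this I would write $v_{j,n}=\hat v_{j,n}+(v_{j,n}-\hat v_{j,n})$ and combine the bound $\big|\hat v_{i,n}^\top\hat v_{j,n}\big|\leq Q(\beta^2+\beta^4)\tau^2$ from Lemma \ref{lem15k}, the inductive hypothesis $\|v_{j,n}-\hat v_{j,n}\|\leq Q\tau^2$, and $\|\hat v_{i,n}\|\leq 1+Q\tau^2$ (again Lemma \ref{lem15k}). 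With this in hand,
\[
\big|Y_{i,n}^2-1\big|=\bigg|\|\hat v_{i,n}\|^2-1-\sum_{j=1}^{i-1}(\hat v_{i,n}^\top v_{j,n})^2\bigg|\leq Q\tau^2,
\]
where from this point on $Q$ is permitted to depend on $k$ (and on $\alpha,\beta$), since the sum carries up to $k-1$ terms. For $\tau$ sufficiently small this forces $Y_{i,n}\geq 1/\sqrt{2}$, so $1/Y_{i,n}$ is bounded and $\big|1/Y_{i,n}-1\big|\leq Q\tau^2$.

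Finally I would subtract $\hat v_{i,n}$ from the Gram--Schmidt identity,
\[
v_{i,n}-\hat v_{i,n}=\Big(\frac{1}{Y_{i,n}}-1\Big)\hat v_{i,n}-\frac{1}{Y_{i,n}}\sum_{j=1}^{i-1}(\hat v_{i,n}^\top v_{j,n})\,v_{j,n},
\]
and estimate the two terms separately: the first by $\big|1/Y_{i,n}-1\big|\,\|\hat v_{i,n}\|\leq Q\tau^2$, and the second by $\frac{1}{Y_{i,n}}\sum_{j<i}\big|\hat v_{i,n}^\top v_{j,n}\big|\,\|v_{j,n}\|\leq Q k\tau^2$, using $\|v_{j,n}\|=1$ from (\ref{propn}). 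This yields (\ref{lem2keq}) for $i$ and closes the induction.

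The hard part is the propagation of the orthogonality defect along the Gram--Schmidt recursion: one cannot use $\hat v_{i,n}^\top\hat v_{j,n}$ directly but must control $\hat v_{i,n}^\top v_{j,n}$ through the inductive hypothesis, and this is precisely where the constant acquires its dependence on $k$ (through the $i-1\leq k-1$ summands, which is why the index-robust refinement is postponed to Section 6). Everything else — the retraction correction of the position and the sizes of $\tilde v_{i,n}$ and $\hat v_{i,n}$ — is already supplied by Lemmas \ref{lem0k}--\ref{lem15k}, so the remaining work is routine bookkeeping.
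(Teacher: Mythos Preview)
Your argument is correct. The paper omits the proof of Lemma~\ref{lem2k} entirely, deferring to \cite[Lemma~4.2]{paper1}; the technique can be inferred from the index-robust refinement in Lemma~\ref{lem2knew}. There the induction is organised slightly differently: instead of carrying the full error $\|v_{j,n}-\hat v_{j,n}\|$ as inductive hypothesis, one fixes a threshold $G>M$ and proves directly by induction on $m$ that $|\hat v_{i,n}^\top v_{m,n}|\leq G\tau^2$ for all $m<i$, using the explicit Gram--Schmidt formula
\[
\hat v_{i,n}^\top v_{m,n}=\frac{1}{Y_{m,n}}\Big(\hat v_{i,n}^\top\hat v_{m,n}-\sum_{j<m}(\hat v_{m,n}^\top v_{j,n})(\hat v_{i,n}^\top v_{j,n})\Big)
\]
and the self-consistent condition (\ref{aimh}); only afterwards is $\|v_{i,n}-\hat v_{i,n}\|$ bounded in a single step. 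The payoff of that organisation is a constant that does not blow up with $k$ (each inductive use contributes a factor $\tau^2$, not a factor of the previous constant), which is exactly what Section~\ref{secind} needs. Your route, by contrast, feeds the inductive bound on $\|v_{j,n}-\hat v_{j,n}\|$ back through $\hat v_{i,n}^\top(v_{j,n}-\hat v_{j,n})$ and then sums $i-1$ such terms, so the implied constant may grow rapidly in $k$; but since Lemma~\ref{lem2k} explicitly permits $Q$ to depend on $k$, this is perfectly adequate here and arguably the more transparent argument.
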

\begin{proof}.
Based on Lemma \ref{lem15k}, the proof could be performed by the same techniques as \cite[Lemma 4.2]{paper1} and is thus omitted.
 \end{proof}

\section{Index-dependent error estimate}
We present error estimates for the scheme (\ref{FDsadk}). Note that in this section, the constant $Q$ in the main theorem may depend on the index $k$, which implies that the corresponding estimate is not index-robust. An improvement for analyzing an index-robust error estimate in an averaged norm is given in Section \ref{secind}.

We bound the errors 
$$e^x_n:=x(t_n)-x_n,~~e^{v_i}_{n}:=v_i(t_n)-v_{i,n},~~1\leq n\leq N,~~1\leq i\leq k.$$ 
 based on the following lemma.
 \begin{lemma}\label{lemgron}
 Suppose there exists a non-negative sequence $\{Z_n\}_{n=0}^N$ with $Z_0=0$ satisfying
 \begin{equation}\label{ZZ} Z_n\leq Z_{n-1}+A\tau Z_{n-1}+B\tau^2\sum_{m=1}^{n-1}Z_m+C\tau^2 \end{equation}
 for $1\leq n\leq N$ and for some non-negative constants $A$, $B$ and $C$ with $A+BT\neq 0$. Then the following estimate of $Z_n$ holds
 \begin{equation*}
 Z_n\leq \frac{C}{A+BT}\big(e^{(A+BT)T}-1\big)\tau,~~1\leq n\leq N.
  \end{equation*}
 \end{lemma}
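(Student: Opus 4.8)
The plan is to collapse the mixed local-plus-nonlocal recursion (\ref{ZZ}) into a one-step linear discrete Gronwall inequality for the running maximum $Z_n^\star:=\max_{0\le m\le n}Z_m$, and then to close it by summing a geometric series.

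First I would crudely majorize every term on the right-hand side of (\ref{ZZ}) by $Z_{n-1}^\star$. Since all $Z_m\ge 0$ and $(n-1)\tau\le N\tau=T$, the nonlocal term obeys $B\tau^2\sum_{m=1}^{n-1}Z_m\le B\tau^2(n-1)Z_{n-1}^\star\le BT\tau\, Z_{n-1}^\star$, while $Z_{n-1}+A\tau Z_{n-1}\le (1+A\tau)Z_{n-1}^\star$. Setting $\lambda:=A+BT$, inequality (\ref{ZZ}) then gives
$$Z_n\le (1+\lambda\tau)Z_{n-1}^\star+C\tau^2,\qquad 1\le n\le N.$$
The key observation is that the right-hand side here is already $\ge Z_{n-1}^\star$, hence it dominates both $Z_n$ and $Z_{n-1}^\star$, and therefore also $Z_n^\star=\max\{Z_{n-1}^\star,Z_n\}$; this upgrades the estimate to the scalar recursion
$$Z_n^\star\le (1+\lambda\tau)Z_{n-1}^\star+C\tau^2,\qquad Z_0^\star=Z_0=0.$$

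Then I would iterate this recursion and sum the resulting geometric series --- this is the step where the hypothesis $A+BT\ne 0$ is used, to divide by $\lambda\tau$ --- obtaining $Z_n^\star\le C\tau^2\sum_{j=0}^{n-1}(1+\lambda\tau)^j=\frac{C\tau}{\lambda}\big((1+\lambda\tau)^n-1\big)$. Finally, the elementary inequality $1+\lambda\tau\le e^{\lambda\tau}$ yields $(1+\lambda\tau)^n\le e^{\lambda n\tau}=e^{\lambda t_n}\le e^{\lambda T}$, so that $Z_n\le Z_n^\star\le \frac{C}{A+BT}\big(e^{(A+BT)T}-1\big)\tau$ for all $1\le n\le N$, which is the assertion. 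I do not expect a genuine obstacle here; the only points requiring a little care are the passage from the bound on $Z_n$ to the bound on $Z_n^\star$ (it is precisely this monotonization that neutralizes the nonlocal sum and makes a one-step Gronwall iteration legitimate) and the bookkeeping around $A+BT\ne 0$ --- in the degenerate case $A=B=0$ one simply has $Z_n\le Cn\tau^2\le CT\tau$, consistent with the $\lambda\to 0$ limit of the stated bound.
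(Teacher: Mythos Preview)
Your proof is correct and follows essentially the same route as the paper: both arguments majorize the nonlocal sum via $B\tau^2\sum_{m=1}^{n-1}Z_m\le BT\tau\cdot\max_{m\le n-1}Z_m$ and then close a one-step recursion with growth factor $1+(A+BT)\tau$, arriving at the identical intermediate bound $Z_n\le \frac{C\tau}{A+BT}\big((1+(A+BT)\tau)^n-1\big)$. The only cosmetic difference is that the paper verifies this closed form directly by induction, whereas you first pass to the running maximum $Z_n^\star$ to obtain the one-step recursion and then sum the geometric series.
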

 \begin{proof}.
 We first prove by induction that 
 \begin{equation}\label{Z} Z_n\leq \frac{C}{A+BT}\big((1+A\tau+BT\tau)^n-1\big)\tau,~~1\leq n\leq N. \end{equation}
 It is clear that (\ref{Z}) holds for $n=1$. Suppose (\ref{Z}) holds for $1\leq n\leq \tilde n-1$. Then we invoke these estimates in (\ref{ZZ}) to obtain
 $$\begin{array}{l}
 \ds Z_{\tilde n}\leq \frac{C}{A+BT}\big((1+A\tau+BT\tau)^{\tilde n-1}-1\big)\tau \big(1+A\tau+B\tau^2(\tilde n-1)\big)+C\tau^2 \\[0.15in]
 \ds\quad~\leq \frac{C}{A+BT}\big((1+A\tau+BT\tau)^{\tilde n}-(1+A\tau+B\tau T)\big)\tau +C\tau^2\\[0.15in]
 \ds\quad~=\frac{C}{A+BT}\big((1+A\tau+BT\tau)^{\tilde n}-1\big)\tau-\frac{C}{A+BT}(A\tau+B\tau T)\tau +C\tau^2\\[0.15in]
 \ds\quad~=\frac{C}{A+BT}\big((1+A\tau+BT\tau)^{\tilde n}-1\big)\tau.
\end{array}  $$
That is, (\ref{Z}) holds for $n=\tilde n$ and thus for any $1\leq n\leq N$ by mathematical induction. We then apply the following estimate
$$(1+A\tau+BT\tau)^n=\big(1+(A+BT)\tau\big)^{\frac{1}{(A+BT)\tau}(A+BT)T}\leq e^{(A+BT)T} $$
to complete the proof.
 \end{proof}

\begin{theorem}\label{thmevk}
Suppose the Assumption A holds, $\alpha=\beta$ and $\sqrt{2}\beta L_2 \tau\leq 1-\theta$ for some $0<\theta<1$. Then the following estimate holds 
$$\|e^x_{n}\|+\mathcal E_n\leq Q\tau,~~1\leq n\leq N,~~\mathcal E_m:=\sum_{j=1}^k\|e^{v_j}_{m}\|. $$
Here $Q$ is independent from $\tau$, $n$ and $N$ but may depend on $k$, $\alpha$ and $\beta$.
\end{theorem}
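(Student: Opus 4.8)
The plan is to reduce everything to the scalar recursion in Lemma~\ref{lemgron} for the quantity $Z_n := \|e^x_n\| + \mathcal E_n$, which satisfies $Z_0 = 0$. First I would estimate $\|e^x_n\|$ by splitting $e^x_n = \big(x(t_n)-\tilde x_n\big) + \big(\tilde x_n - x_n\big)$: the second term is $O(\tau^2)$ by Lemma~\ref{lem0k}, and for the first I would subtract the first equation of (\ref{FDsadk}) from the first equation of (\ref{Refk}). The discrepancy between the two projected forces is controlled by Assumption~A together with the elementary identity
$$ x(t_{n-1})x(t_{n-1})^\top - x_{n-1}x_{n-1}^\top = e^x_{n-1}x(t_{n-1})^\top + x_{n-1}(e^x_{n-1})^\top $$
and its analogue for $v_j(t_{n-1})v_j(t_{n-1})^\top - v_{j,n-1}v_{j,n-1}^\top$. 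Since all the vectors involved have unit norm (by the constraints (\ref{prop}), (\ref{propn}) and the retraction step), each such term is bounded by $2\|e^x_{n-1}\|$ or $2\|e^{v_j}_{n-1}\|$, and summation over $j$ produces $\mathcal E_{n-1}$; this yields $\|e^x_n\| \le \|e^x_{n-1}\| + A_1\tau Z_{n-1} + C_1\tau^2$.

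For the eigenvector errors I would chain the three intermediate vectors of the scheme, writing $e^{v_i}_n = \big(v_i(t_n)-\tilde v_{i,n}\big) + \big(\tilde v_{i,n}-\hat v_{i,n}\big) + \big(\hat v_{i,n}-v_{i,n}\big)$. The last two brackets are $O(\tau^2)$ by Lemma~\ref{lem15k} and Lemma~\ref{lem2k} respectively (for $\tau$ small enough that the latter applies) — this is exactly where the retraction, vector transport, and Gram--Schmidt corrections get absorbed. For the first bracket I would subtract the Euler update for $\tilde v_{i,n}$ in (\ref{FDsadk}) from the reference equation for $v_i(t_n)$ in (\ref{Refk}); every term there (the $H$-terms, the projections $xx^\top$, $v_iv_i^\top$, $\sum_{j=1}^{i-1}v_jv_j^\top$, and the term $\beta x v_i^\top F(x)$) is Lipschitz in $(x,\{v_j\})$ on the relevant bounded sets, so its numerical-versus-exact discrepancy is bounded by a linear combination of $\|e^x_{n-1}\|$, $\|e^{v_i}_{n-1}\|$ and $\mathcal E_{n-1}$ (the last coming from the inner sum over $j$), with constants depending on $i\le k$. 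Using that the remainder in (\ref{Refk}) is a genuine $O(\tau^2)$ term (here $E\in C^3$ is needed) and summing over $1\le i\le k$, one obtains $\mathcal E_n \le \mathcal E_{n-1} + A_2\tau Z_{n-1} + C_2\tau^2$ with $A_2, C_2$ depending on $k$.

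Adding the two estimates gives $Z_n \le Z_{n-1} + A\tau Z_{n-1} + C\tau^2$ with $A := A_1+A_2$ (which we may assume strictly positive) and $C := C_1+C_2$ — that is, (\ref{ZZ}) with $B=0$ — so Lemma~\ref{lemgron} yields $Z_n \le \frac{C}{A}\big(e^{AT}-1\big)\tau =: Q\tau$, which is the claim, with $Q$ independent of $\tau$, $n$, $N$ but depending on $k$, $\alpha=\beta$ and $\theta$. I expect the main obstacle to be not any single step but the bookkeeping in the $v_i$-equation: the constrained dynamics couples $x$ with all of $\{v_j\}_{j=1}^k$ through several nonlinear terms as well as retraction and transport operations, so one must track carefully how each contribution feeds into $\|e^x_n\|$, $\|e^{v_i}_n\|$ or $\mathcal E_n$, and invoke Lemmas~\ref{lem0k}--\ref{lem2k} at the right places to discard the constraint-correction terms at order $\tau^2$. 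It is precisely the $k$-dependence of $A_2$ and $C_2$ (from the inner sums $\sum_{j=1}^{i-1}$ and the outer sum over $i$) that breaks index robustness and is repaired in Section~\ref{secind} by measuring the eigenvector error in an averaged norm.
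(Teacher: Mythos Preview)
Your proposal is correct and covers all the essential ingredients: the splitting via $\tilde x_n$ and via $\tilde v_{i,n},\hat v_{i,n}$, the use of Lemmas~\ref{lem0k}, \ref{lem15k}, \ref{lem2k} to discard the constraint-correction terms at order~$\tau^2$, and the Lipschitz bookkeeping on the projected force and Hessian terms.

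The organisation, however, differs from the paper's. You combine $\|e^x_n\|$ and $\mathcal E_n$ into a single scalar $Z_n$ from the start and arrive at the simple recursion $Z_n\le (1+A\tau)Z_{n-1}+C\tau^2$, which is Lemma~\ref{lemgron} with $B=0$, i.e.\ the ordinary discrete Gronwall inequality. The paper instead proceeds in two stages: it first applies Gronwall to the $x$-error alone to obtain $\|e^x_n\|\le Q\tau\sum_{m=1}^{n-1}\mathcal E_m+Q\tau$ (equation~(\ref{MH})), then substitutes this into the $\mathcal E_n$ recursion, producing the cumulative-sum term $Q\tau^2\sum_{m=1}^{n-1}\mathcal E_m$ and hence the need for the full Lemma~\ref{lemgron} with $B>0$. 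Your one-stage route is shorter and more elementary for Theorem~\ref{thmevk} as stated. The paper's two-stage route, on the other hand, is what gets reused in Section~\ref{secind}: there $\|e^x_n\|$ and the \emph{averaged} eigenvector error $\|e^v_n\|_A=\mathcal E_n/k$ scale differently with $k$, so one really wants the intermediate estimate (\ref{MH})/(\ref{MHz}) that expresses $\|e^x_n\|$ in terms of the $v$-errors before combining.
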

\begin{proof}. 
We subtract the first equation of (\ref{Refk}) from that of (\ref{FDsadk}) to obtain
\begin{equation}\label{ee}
 \begin{array}{l}
\ds\hspace{-0.1in} e^x_{n} =e^x_{n-1}+\tau\alpha\bigg(I -x(t_{n-1})x(t_{n-1})^\top\\
\ds\hspace{1in}-2\sum_{j=1}^k v_j(t_{n-1})v_j(t_{n-1})^\top \bigg)F(x(t_{n-1}))\\
\ds\hspace{0.7in} -\tau\alpha\bigg(I -x_{n-1}x_{n-1}^\top-2\sum_{j=1}^k v_{j,n-1}v_{j,n-1}^\top \bigg)F(x_{n-1})\\[0.2in]
\ds\hspace{1in}+O(\tau^2)+(\tilde x_n-x_n)\\[0.05in]
\ds\hspace{-0.2in}\qquad=e^x_{n-1}+\tau\alpha\big(F(x(t_{n-1}))-F(x_{n-1})\big)\\[0.1in]
\ds\hspace{-0.1in}\qquad\quad-\tau \alpha\big(x(t_{n-1})x(t_{n-1})^\top F(x(t_{n-1}))-x_{n-1}x_{n-1}^\top F(x_{n-1})\big)\\[0.05in]
\ds\hspace{-0.1in}\qquad\quad-2\tau\alpha\sum_{j=1}^k\big(v_j(t_{n-1})v_j(t_{n-1})^\top F(x(t_{n-1}))\\
\ds\hspace{1in}-v_{j,n-1}v_{j,n-1}^\top F(x_{n-1})\big)+O(\tau^2)+(\tilde x_n-x_n). 
\end{array} 
\end{equation}
We bound the forth term on the right-hand side of (\ref{ee}) as
\begin{equation}\label{proj1} \begin{array}{l}
\ds \bigg\|\tau\sum_{j=1}^k\big(v_j(t_{n-1})v_j(t_{n-1})^\top F(x(t_{n-1}))-v_{j,n-1}v_{j,n-1}^\top F(x_{n-1})\big)\bigg\|\\[0.15in]
\ds\quad\leq \tau\sum_{j=1}^k\big\|e^{v_j}_{n-1}v_j(t_{n-1})^\top F(x(t_{n-1}))+v_{j,n-1}(e^{v_j}_{n-1})^\top F(x(t_{n-1}))\\[0.2in]
\ds\qquad+v_{j,n-1} v_{j,n-1}^\top (F(x(t_{n-1}))-F(x_{n-1}))\big\|\leq Q\tau \big(\mathcal E_{n-1}+\|e^x_{n-1}\| \big).
\end{array} \end{equation}
Here we used the projection nature of the third difference (i.e. the summation containing $F(x(t_{n-1}))-F(x_{n-1})$) such that $Q$ is independent from $k$. The other right-hand side terms could be bounded in a similar manner. We thus obtain from (\ref{ee}) and Lemma \ref{lem0k} that
\begin{equation}\label{hl}
\|e^x_n\|\leq \|e^x_{n-1}\|+Q\tau \|e^x_{n-1}\|+Q\tau\mathcal E_{n-1}+Q\tau^2. 
\end{equation}
An application of the standard discrete Gronwall inequality yields
\begin{equation}\label{MH}
\|e^x_{n}\|\leq Q\tau\sum_{m=1}^{n-1}\mathcal E_m +Q\tau ,~~1\leq n\leq N. 
\end{equation}

We then subtract the third equation of (\ref{Refk}) from that of (\ref{FDsadk}) and split
$v_i(t_n)-\tilde v_{i,n}$ as 
$$v_i(t_n)-\tilde v_{i,n}=e^{v_i}_n+(v_{i,n}-\hat v_{i,n})+(\hat v_{i,n}-\tilde v_{i,n})$$
 to obtain
\begin{equation}\label{ev}
\begin{array}{rl}
\ds e^{v_i}_{n}&\ds\hspace{-0.1in}=e^{v_i}_{n-1}+\tau\beta\big(H(x(t_{n-1}))v_i(t_{n-1})-H(x_{n-1})v_{i,n-1}\big)\\[0.05in]
&\ds\quad~~-\tau\beta\big[ v_i(t_{n-1})v_i(t_{n-1})^\top H(x(t_{n-1}))v_i(t_{n-1})\\[0.05in]
&\ds\qquad\quad-v_{i,n-1}v_{i,n-1}^\top H(x_{n-1})v_{i,n-1}\big]\\[0.05in]
&\ds\quad~~-\tau\beta \big[ x(t_{n-1})x(t_{n-1})^\top H(x(t_{n-1}))v_i(t_{n-1})\\[0.05in]
&\ds\qquad\quad-x_{n-1}x_{n-1}^\top H(x_{n-1})v_{i,n-1}\big]\\[0.05in]
&\ds\quad~~-2\tau\beta \sum_{j=1}^{i-1}\big[ v_j(t_{n-1})v_j(t_{n-1})^\top H(x(t_{n-1}))v_i(t_{n-1})\\[0.05in]
&\ds\qquad\quad-v_{j,n-1}v_{j,n-1}^\top H(x_{n-1})v_{i,n-1}\big]\\[0.05in]
&\ds\quad~~ +\tau\beta\big[ x(t_{n-1})v_i(t_{n-1})^\top F(x(t_{n-1}))-x_{n-1}v_{i,n-1}^\top F(x_{n-1})\big]\\[0.05in]
&\ds\quad~~ -(v_{i,n}-\hat v_{i,n})-(\hat v_{i,n}-\tilde v_{i,n})+O(\tau^2) .
\end{array}
\end{equation}
We apply Lemmas \ref{lem15k} and \ref{lem2k} and similar derivations as (\ref{ee}) to get
\begin{equation*}
\ds\| e^{v_i}_{n}\|\leq \|e^{v_i}_{n-1}\|+Q\tau\big(\|e^x_{n-1}\|+\|e^{v_i}_{n-1}\|\big)+Q\tau\mathcal E_{n-1} +Q\tau^2.
\end{equation*}
Adding this equation from $i=1$ to $k$ and using (\ref{MH}) we obtain
\begin{equation}\label{mh8k}
\ds \mathcal E_{n}\leq \mathcal E_{n-1}+Q\tau \mathcal E_{n-1}+Q\tau^2\sum_{m=1}^{n-1} \mathcal E_{m} +Q\tau^2.
\end{equation}
Then an application of Lemma \ref{lemgron} leads to the conclusion of the theorem.
\end{proof}

\section{Numerical experiments}
In this section, we carry out numerical experiments to test the numerical accuracy of the scheme (\ref{FDsadk}). For practical applications of the constrained high-index saddle dynamics (\ref{sadk}) we refer \cite{YinHua} for more details. Let $T=\alpha=\beta=1$, $\|e^x\|:=\max_{1\leq n\leq N}\|x(t_n)-x_n\|$,  $\|e^{v_i}\|:=\max_{1\leq n\leq N}\|v_i(t_n)-v_{i,n}\|$ for $1\leq i\leq k$
and we use numerical solutions computed under $\tau=2^{-13}$ as the reference solutions due to the unavailability of the exact solutions.

\subsection{Accuracy test}
We consider the index-1 constrained saddle dynamics for 
the 4-well potential proposed in \cite{Col}
\begin{equation}\label{col}
 E(x_1,x_2)= x_1^4-px_1^2+x_2^4-x_2^2+qx_1^2x_2^2
\end{equation}
with the initial conditions
$$X_0=\frac{1}{\sqrt{2}}(1,1),~~V_1=\frac{1}{\sqrt{2}}(-1,1) $$ 
and the parameters 
$$(\text{i})~(p,q)=(5,1),~~(\text{ii})~(p,q)=(10,5). $$
Numerical results are presented in Tables \ref{table1:1}-\ref{table2:1}, which demonstrate the first-order accuracy of the scheme (\ref{FDsadk}) as proved in Theorem \ref{thmevk}.
\begin{table}[h]
\setlength{\abovecaptionskip}{0pt}
\centering
\caption{Convergence rates for surface (\ref{col}) under the parameter (i).}
\begin{tabular}{ccccc} \cline{1-5}
$\tau$& $\|e^x\|$ & conv. rate &  $\|e^{v_1}\|$ &conv. rate\\ \cline{1-5}		
$1/2^5$&	2.31E-02&		&2.31E-02&	\\
$1/2^6$&	1.15E-02&	1.01&	1.15E-02&	1.02\\
$1/2^7$&	5.67E-03&	1.02&	5.67E-03&	1.02\\
$1/2^8$&	2.79E-03&	1.02&	2.79E-03&	1.03\\
				\hline
			\end{tabular}
			\label{table1:1}
		\end{table}

\begin{table}[h]
\setlength{\abovecaptionskip}{0pt}
\centering
\caption{Convergence rates for surface (\ref{col}) under the parameter (ii).}
\begin{tabular}{ccccc} \cline{1-5}
$\tau$& $\|e^x\|$ & conv. rate & $\|e^{v_1}\|$ &conv. rate\\ \cline{1-5}		
$1/2^5$&	5.40E-02&		&5.40E-02	&\\
$1/2^6$&	2.63E-02&	1.04&	2.63E-02&	1.04\\
$1/2^7$&	1.29E-02&	1.02&	1.29E-02&	1.02\\
$1/2^8$&	6.31E-03&	1.03&	6.31E-03&	1.03\\
				\hline
			\end{tabular}
			\label{table2:1}
		\end{table}

\subsection{Dynamic convergence}\label{sec62}
We consider the index-1 constrained saddle dynamics for the Rosenbrock type function
\begin{equation}\label{Ros} E(x_1,x_2,x_3)=a(\sqrt{3}x_2-3x_1^2)^2+b(\sqrt{3}x_1-1)^2+a(\sqrt{3}x_3-3x_2^2)^2+b(\sqrt{3}x_2-1)^2 
\end{equation}
equipped with the parameters $a=2$, $b=-9.8$, $T=5$ and different initial conditions
$$X_0=\frac{1}{\sqrt{29}}(2,-3,4),~~X'_0=\frac{1}{\sqrt{3}}(-1,-1,1),~~V_{1,0}=\frac{1}{\sqrt{2}}(1,1,0). $$
Under the given parameters, this surface has an index-1 saddle point located at
$$\bigg(\frac{1}{\sqrt{3}},\frac{1}{\sqrt{3}},\frac{1}{\sqrt{3}}\bigg).$$
 Numerical results are presented in Figure \ref{fig2}, which shows that the numerical saddle dynamics could reach the target saddle point under different initial values, even though the curvature of the Rosenbrock type function is complicated. Furthermore, the dynamic convergence of the numerical saddle dynamics as the step size $\tau$ decreases is observed, which demonstrates that
the proposed scheme is appropriate for computing the dynamic pathways
for constructing the solution landscapes, e.g. \cite{HanYin,Yin2020nucleation,YinSCM,YinHua}.

	\begin{figure}[h!]
	\setlength{\abovecaptionskip}{0pt}
	\centering	
\,	\includegraphics[width=3in,height=2.7in]{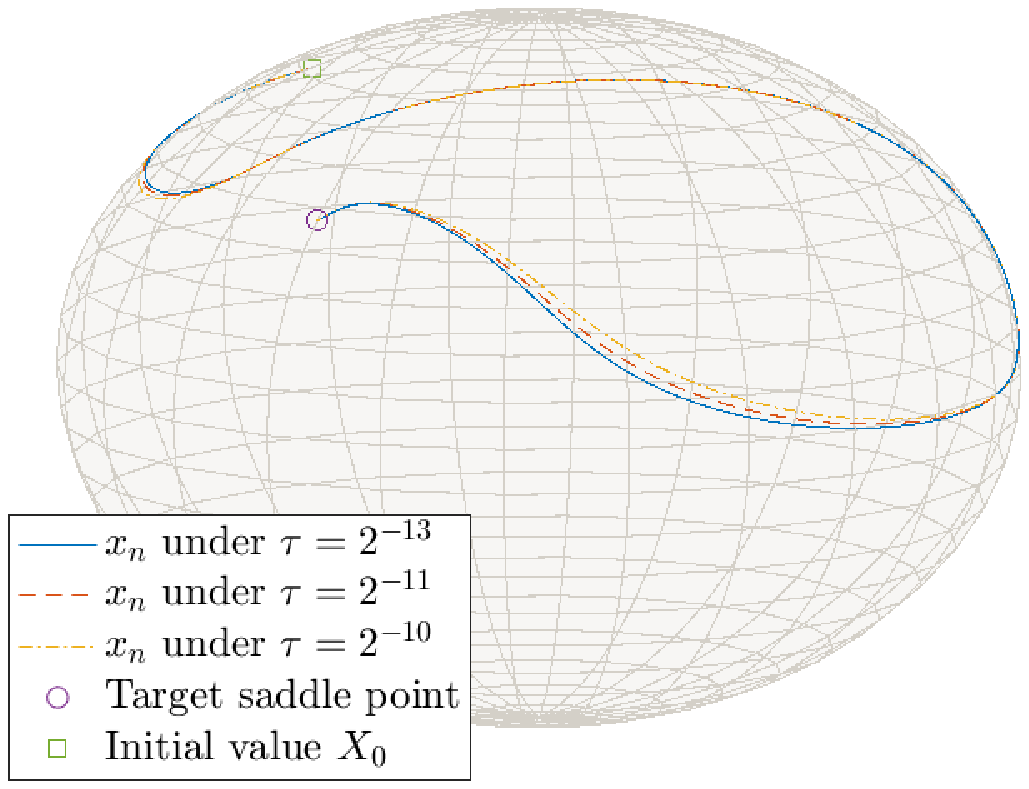}\hspace{-0.4in}
\includegraphics[width=3in,height=2.7in]{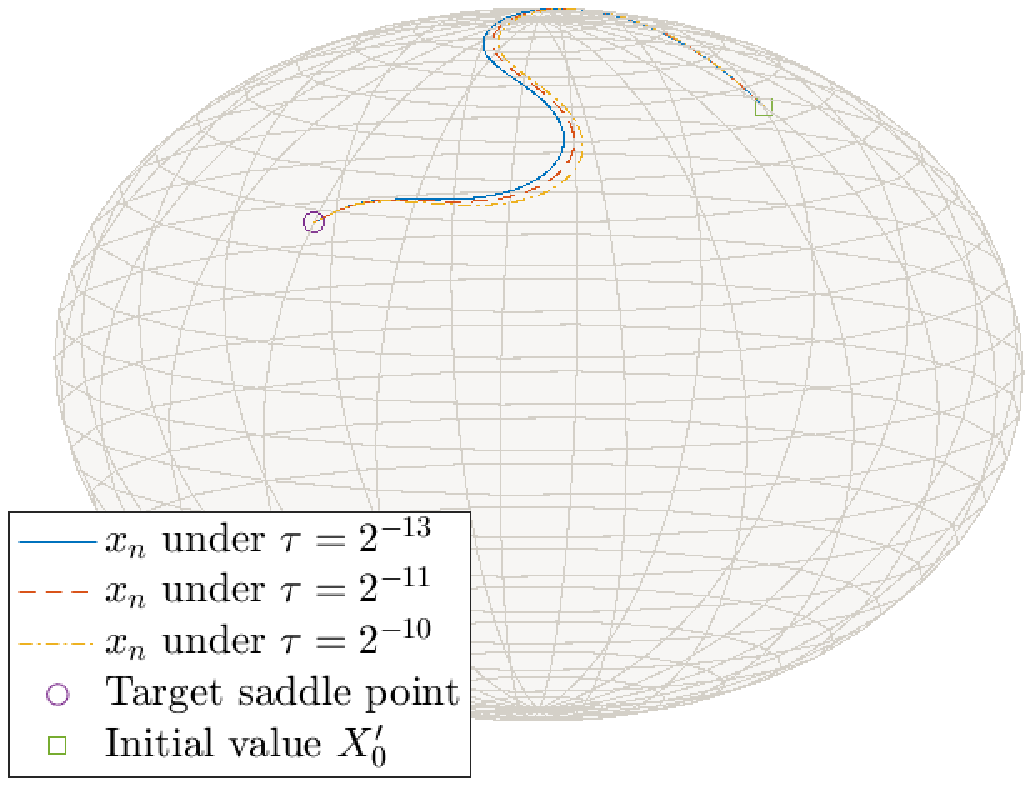}
	\caption{Pathway convergence of constrained saddle dynamics under different initial values.}
	\label{fig2}
\end{figure}

\section{Index-robust numerical analysis}\label{secind}
In previous sections, we develop error estimates for the numerical discretization (\ref{FDsadk}) of the constrained high-index saddle dynamics (\ref{sadk}). However, as the constant $Q$ in the estimate of Theorem \ref{thmevk} depends on $k$, the numerical accuracy could be reduced if $k$ is large enough in real applications. In other words, the estimate in Theorem \ref{thmevk} is not index-robust. Therefore, we aim to develop an index-robust error estimate in this section to eliminate the dependence of $k$ on $Q$ in Theorem \ref{thmevk}, which serves as an improvement for the previous results. 

The key ideas to achieve this goal lie in adjusting the relaxation parameter and estimating the errors in the averaged norm. Suppose   
\begin{equation}\label{betacond}
\alpha=\beta\leq \frac{Q_0}{k} 
 \end{equation}
for some positive constant $Q_0$. Furthermore, the following averaged norm $\|\cdot\|_A$ is defined for any sequence of vectors $\{g_i\}_{i=1}^k$
$$\|g\|_A:=\frac{1}{k}\sum_{i=1}^k \|g_i\|.$$
Based on these presumptions, we immediately obtain from Lemmas \ref{lem0k}--\ref{lem15k} that for $1\leq n\leq N$
\begin{equation}\label{lemimp}
\begin{array}{c}
\ds \|x_n-\tilde x_n\|\leq M\frac{\tau^2}{k^2},~~\|\hat v_{i,n}-\tilde v_{i,n}\|\leq M\frac{\tau^2}{k^2},~~1\leq i\leq k,\\[0.1in]
 \ds \big|\hat v_{m,n}^\top \hat v_{i,n}\big|\leq M\frac{\tau^2}{k^2},~~1\leq m<i\leq k,\\[0.1in]
 \ds  \big|\|\hat v_{i,n}\|^2-1\big|\leq M\frac{\tau^2}{k^2},~~1\leq i\leq k.
\end{array}
\end{equation}
Here the constant $M$ is independent from $\tau$, $n$, $N$, $\alpha$, $\beta$ and $k$. To prove the desired results, the Lemma \ref{lem2k} and the Theorem \ref{thmevk} need to be re-estimated in the following subsections.
\subsection{Re-estimation of Lemma \ref{lem2k}}
We re-estimate Lemma \ref{lem2k} such that the constant $Q$ in (\ref{lem2keq}) is independent from $k$. Though the derivations of the re-estimation follow the ideas of the proof of Lemma \ref{lem2k} in \cite[Lemma 4.2]{paper1}, there are essential differences in that the condition (\ref{betacond}) is invoked to eliminate the dependence of the constants $Q$ on $k$ throughout the derivations.
\begin{lemma}\label{lem2knew}
Suppose the Assumption A and (\ref{betacond}) hold, then the following estimate holds for $1\leq n\leq N$ and for $\tau$ small enough  
$$\|v_{i,n}-\hat v_{i,n}\|\leq Q\tau^2,~~1\leq i\leq k.$$
Here the positive constant $Q$ is independent from $n$, $N$, $\tau$, $\alpha$, $\beta$ and $k$. 
\end{lemma}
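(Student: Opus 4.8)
The plan is to rerun the induction-on-$i$ argument behind Lemma \ref{lem2k} (and \cite[Lemma 4.2]{paper1}), but to replace every $O(\tau^2)$ Gram-type quantity by the sharper $O(\tau^2/k^2)$ bounds collected in (\ref{lemimp}), which is exactly what the standing assumption (\ref{betacond}) buys us. Write $w_{i,n}:=\hat v_{i,n}-\sum_{j=1}^{i-1}(\hat v_{i,n}^\top v_{j,n})v_{j,n}$, so that $v_{i,n}=w_{i,n}/Y_{i,n}$ and, because $\{v_{j,n}\}_{j<i}$ is orthonormal by (\ref{propn}), $Y_{i,n}=\|w_{i,n}\|$ with $\|w_{i,n}\|^2=\|\hat v_{i,n}\|^2-\sum_{j<i}(\hat v_{i,n}^\top v_{j,n})^2$. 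Introduce the cross inner products $a_{i,j}:=\hat v_{i,n}^\top v_{j,n}$ and $g_{i,j}:=\hat v_{i,n}^\top\hat v_{j,n}$ for $j<i$. From $Y_{j,n}v_{j,n}=\hat v_{j,n}-\sum_{l<j}a_{j,l}v_{l,n}$ one obtains the recursion
$$a_{i,j}=\frac{1}{Y_{j,n}}\Big(g_{i,j}-\sum_{l=1}^{j-1}a_{j,l}\,a_{i,l}\Big),\qquad 1\le j<i\le k,$$
which is the object on which all the index book-keeping will be done.

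First I would establish, by induction on $i$ (and, inside each step, running through $j=1,\dots,i-1$ in order), the bound $|a_{i,j}|\le\varepsilon$ for every $j<i$, where $\varepsilon$ is the smaller root of the quadratic $2\big(M\tau^2/k^2+k\varepsilon^2\big)=\varepsilon$. Indeed, (\ref{lemimp}) gives $|g_{i,j}|\le M\tau^2/k^2$ and $\big|\|\hat v_{i,n}\|^2-1\big|\le M\tau^2/k^2$; the latter together with $\|w_{i,n}\|^2=\|\hat v_{i,n}\|^2-\sum_{j<i}a_{i,j}^2$ gives $\big|Y_{i,n}^2-1\big|\le M\tau^2/k^2+k\varepsilon^2$, hence $Y_{i,n}\ge 1/2$ for $\tau$ small, so that the recursion yields $|a_{i,j}|\le 2\big(M\tau^2/k^2+(j-1)\varepsilon^2\big)\le 2\big(M\tau^2/k^2+k\varepsilon^2\big)\le\varepsilon$ as soon as $\varepsilon$ lies between the two roots of that quadratic; choosing the smaller root one checks that $\varepsilon=O(\tau^2/k^2)$ for $\tau$ small. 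This last step is precisely where the factor $k^{-2}$ in (\ref{lemimp}) is indispensable: it forces the fixed point of the quadratic to be of size $\tau^2/k^2$ rather than $O(1)$, so that summing the at-most-$k$ terms of the inner sum is harmless.

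With $|a_{i,j}|=O(\tau^2/k^2)$ in hand, I would finish by writing
$$v_{i,n}-\hat v_{i,n}=\frac{w_{i,n}-\hat v_{i,n}}{Y_{i,n}}+\hat v_{i,n}\Big(\frac{1}{Y_{i,n}}-1\Big),$$
and bounding the right-hand side through $\|w_{i,n}-\hat v_{i,n}\|^2=\sum_{j<i}a_{i,j}^2\le k\varepsilon^2$, $\big|Y_{i,n}-1\big|\le\big|Y_{i,n}^2-1\big|\le M\tau^2/k^2+k\varepsilon^2$, $Y_{i,n}\ge 1/2$, and $\|\hat v_{i,n}\|\le(1+M\tau^2/k^2)^{1/2}$, all of which come from (\ref{lemimp}) and the previous paragraph. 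Every constant produced along the way is absolute, so $\|v_{i,n}-\hat v_{i,n}\|\le Q\tau^2$ with $Q$ independent of $n$, $N$, $\tau$, $\alpha$, $\beta$ and $k$ (in fact the argument delivers an extra $k^{-3/2}$, which we may discard).

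The main obstacle --- and the reason the index-dependent Lemma \ref{lem2k} does not already suffice --- is the accumulation of error over the up-to-$k$ Gram--Schmidt sweeps: each subtraction of a previously computed $v_{j,n}$ contributes, and a careless estimate lets these contributions pile up a factor proportional to $k$. The remedy is the self-consistent quadratic estimate above, where the $k$ terms of the inner sum are absorbed by the $k^{-2}$ smallness coming from (\ref{betacond}); I would be careful to keep the book-keeping at the level of $\sum_j a_{i,j}^2$ rather than $\sum_j|a_{i,j}|$, and to couple the bound on the $a_{i,j}$ to $Y_{i,n}\ge 1/2$ so that neither can degenerate first. An equivalent but more compact packaging is the Cholesky factorization $R_n^\top R_n=\hat V_n^\top\hat V_n$ with $V_n=\hat V_n R_n^{-1}$, where $\hat V_n$ and $V_n$ have the $\hat v_{i,n}$ and $v_{i,n}$ as columns: (\ref{lemimp}) gives $\|\hat V_n^\top\hat V_n-I\|_F\le M\tau^2/k$, and Lipschitz continuity of the Cholesky map near the identity yields $\|V_n-\hat V_n\|_F=O(\tau^2/k)$ at once; I would mention this viewpoint but present the elementwise version to stay in line with \cite[Lemma 4.2]{paper1}.
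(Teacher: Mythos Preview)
Your proposal is correct and follows essentially the same route as the paper: both arguments control the cross inner products $a_{i,j}=\hat v_{i,n}^\top v_{j,n}$ through the same Gram--Schmidt recursion, use the $\tau^2/k^2$ smallness from (\ref{lemimp}) to close an inductive bound $|a_{i,j}|\le G\tau^2/k^2$ (the paper fixes $G>M$ and imposes condition (\ref{aimh}) on $\tau$, whereas you package this as the smaller root of a quadratic), and then assemble $\|v_{i,n}-\hat v_{i,n}\|$ from $|1-Y_{i,n}|$ and $\sum_{j<i}|a_{i,j}|$ exactly as in the paper. The only cosmetic difference is the induction variable (the paper inducts on the second index $m$, you on $i$ with an inner sweep over $j$) and your extra remark on the Cholesky viewpoint, neither of which changes the substance.
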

\begin{proof}.
Let $G>M$ be a fixed positive number where $M$ is defined in (\ref{lemimp}). Then we firstly prove that, if $\tau$ satisfies the constraint
\begin{equation}\label{aimh}
\frac{M+\tau^2G^2}{\ds\big(1-M\tau^2-G^2\tau^4\big)^{1/2}}\leq G,
\end{equation}
then the following estimates hold
\begin{equation}\label{leme3}
|\hat v_{i,n}^\top v_{m,n}|\leq G\frac{\tau^2}{k^2},~~1\leq m<i\leq k.
\end{equation}
Note that the condition (\ref{aimh}) is independent from $k$ and is valid if $\tau$ is sufficiently small. We prove this argument by induction on the subscription $m$.
For $m=1$ we apply (\ref{lemimp}), the last equation of (\ref{FDsadk}) as well as (\ref{aimh}) to obtain for $1<i\leq k$
\begin{equation}\label{leme1}
\ds |\hat v_{i,n}^\top v_{1,n}|=\frac{|\hat v_{i,n}^\top \hat v_{1,n}|}{(\|\hat v_{1,n}\|^2)^{1/2}}\leq \frac{M\tau^2/k^2}{(1-M\tau^2/k^2)^{1/2}}\leq\frac{M}{(1-M\tau^2)^{1/2}}\frac{\tau^2}{k^2}\leq G\frac{\tau^2}{k^2}. 
\end{equation}
Thus, (\ref{leme3}) holds with $m=1$. Suppose (\ref{leme3}) holds for $1\leq m<m^*$ for some $1\leq m^*<k-1$. Then we invoke (\ref{leme3}) with $1\leq m<m^*$ and (\ref{lemimp}) into the expression of $\hat v_{i,n}^\top v_{m^*,n}$ to obtain for $m^*<i\leq k$
$$\begin{array}{rl}
\ds | \hat v_{i,n}^\top v_{m^*,n}|&\hspace{-0.1in}\ds=\frac{\ds\bigg|\hat v_{i,n}^\top\hat v_{m^*,n}-\sum_{j=1}^{m^*-1}(\hat v_{m^*,n}^\top v_{j,n})(\hat v_{i,n}^\top v_{j,n})\bigg|}{\ds\bigg(\|\hat v_{m^*,n}\|^2-\sum_{j=1}^{m^*-1}(\hat v_{m^*,n}^\top v_{j,n})^2\bigg)^{1/2}} \\[0.15in]
&\ds\hspace{-0.1in}\leq\frac{\ds M\frac{\tau^2}{k^2}+(m^*-1)\frac{G^2\tau^4}{k^4}}{\ds \bigg(1-M\frac{\tau^2}{k^2}-(m^*-1)\frac{G^2\tau^4}{k^4}\bigg)^{1/2}} \\[0.2in]
&\ds\hspace{-0.1in} \leq \frac{M+G^2\tau^2}{\big(1-M\tau^2-\tau^4G^2\big)^{1/2}}\frac{\tau^2}{k^2}\leq G\frac{\tau^2}{k^2},~~m^*<i\leq k. 
\end{array}$$
That is, (\ref{leme3}) holds for $m=m^*$ and thus holds for any $1\leq m<k$ by mathematical induction.

Similar to the above derivations, there exists a constant $Q>0$ independent from  $n$, $N$, $\tau$, $\alpha$, $\beta$ and $k$ such that for $1\leq i\leq k$ and $1\leq n\leq N$ 
 \begin{equation}\label{bndY}
 \ds (1-Q\tau^2)^{1/2}\leq |Y_{i,n}|\leq (1+Q\tau^2)^{1/2}.
\end{equation}
Note that this implies
\begin{equation}\label{bndY2}
|1-Y_{i,n}|\leq |1-Y_{i,n}|(1+Y_{i,n})=|Y^2_{i,n}-1|\leq Q\tau^2.
\end{equation}    
Then we remain to estimate $v_{i,n}-\hat v_{i,n}$ for $1\leq i\leq k$. According to the definition of $v_{i,n}$ we have
\begin{equation*}
v_{i,n}-\hat v_{i,n}=\frac{1}{Y_{i,n}}\bigg((1-Y_{i,n})\hat v_{i,n}-\sum_{j=1}^{i-1}(\hat v_{i,n}^\top v_{j,n})v_{j,n}\bigg),
\end{equation*}
which, together with (\ref{lemimp}), (\ref{leme3}), (\ref{bndY}) and (\ref{bndY2}), implies
\begin{equation*}
\begin{array}{rl}
\ds \|v_{i,n}-\hat v_{i,n}\|&\hspace{-0.1in}\ds\leq \frac{1}{|Y_{i,n}|}\bigg(|1-Y_{i,n}|\|\hat v_{i,n}\|+\sum_{j=1}^{i-1}|\hat v_{i,n}^\top v_{j,n}|\bigg)\\
&\hspace{-0.1in}\ds\leq Q|1-Y_{i,n}|+Qk\frac{\tau^2}{k^2}\leq Q\tau^2.
\end{array}
\end{equation*}
Thus we complete the proof. 
\end{proof}
\begin{remark}
From the proof of Lemma \ref{lem2knew} we note that the same conclusion could be obtained under a weaker constraint on $\beta$ than (\ref{betacond}), that is, the power of $k$ on the denominator could be less than 1. However, we will show in the next proof that (\ref{betacond}) is necessary. 
\end{remark}
\subsection{Index-robust error estimate}
We prove an index-robust error estimate for the numerical scheme (\ref{FDsadk}) in the averaged norm $\|\cdot\|_A$ to the constrained high-index saddle dynamics (\ref{sadk}) in the following theorem.
\begin{theorem}\label{thmevkz}
Suppose the Assumption A and (\ref{betacond}) hold and $\sqrt{2}\beta L_2 \tau\leq 1-\theta$ for some $0<\theta<1$, then the following estimate holds for $\tau$ sufficiently small 
$$\|e^x_{n}\|+\|e_n^v\|_A\leq Q\tau,~~1\leq n\leq N,~~\|e_n^v\|_A=\frac{1}{k}\sum_{j=1}^k\|e^{v_j}_{n}\|=\frac{1}{k}\mathcal E_n. $$
Here $Q$ is independent from $\tau$, $n$, $N$, $\alpha$, $\beta$ and $k$.
\end{theorem}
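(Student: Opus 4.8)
The plan is to follow the architecture of the proof of Theorem \ref{thmevk}, but to keep explicit track of every occurrence of the index $k$ and to use the scaling $\alpha=\beta\le Q_0/k$ from (\ref{betacond}), together with the index-robust auxiliary bounds (\ref{lemimp}) and Lemma \ref{lem2knew}, to absorb all such factors. The outcome will be two coupled recursions with $k$-independent constants: one of discrete Gronwall type for $\|e^x_n\|$, and one of the form (\ref{ZZ}) for the averaged quantity $\|e_n^v\|_A=\frac{1}{k}\mathcal E_n$, to which Lemma \ref{lemgron} applies verbatim; the theorem then follows by back-substitution.

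First I would treat the position error by subtracting the first equation of (\ref{Refk}) from that of (\ref{FDsadk}) as in (\ref{ee}). Among the resulting terms only the projection difference $\sum_{j=1}^k\big(v_j(t_{n-1})v_j(t_{n-1})^\top F(x(t_{n-1}))-v_{j,n-1}v_{j,n-1}^\top F(x_{n-1})\big)$ is sensitive to $k$; I would split it into a piece where the full projection $\sum_{j=1}^k v_{j,n-1}v_{j,n-1}^\top$ acts on $F(x(t_{n-1}))-F(x_{n-1})$ (norm $\le L_1\|e^x_{n-1}\|$, free of $k$) and a piece $\big(\sum_{j=1}^k(v_jv_j^\top-v_{j,n-1}v_{j,n-1}^\top)\big)F(x_{n-1})$, whose operator norm is at most $2\mathcal E_{n-1}$ since each summand has norm at most $2\|e^{v_j}_{n-1}\|$; multiplying by $\tau\alpha\le Q_0\tau/k$ turns this into $Q\tau\|e_{n-1}^v\|_A$ with $k$-free $Q$. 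For the local truncation error I would exploit that the right-hand side of the $x$-equation is $\alpha$ times a norm-one operator applied to $F(x)$, so that differentiating once more produces, besides manifestly $k$-free contributions, the term $-2\alpha\sum_{j=1}^k\big(v_j'v_j^\top+v_j(v_j')^\top\big)F(x)$; since $\|v_j'\|\le Q\beta$ uniformly in $j$ (again by the projection structure of the $v_j$-equation) this term has norm $\le Qk\alpha\beta\le QQ_0\alpha$, whence $\|x''\|\le Q\alpha=O(1/k)$ and the per-step error is $O(\tau^2/k)\le Q\tau^2$. Combining with Lemma \ref{lem0k} via (\ref{lemimp}) gives $\|e^x_n\|\le(1+Q\tau)\|e^x_{n-1}\|+Q\tau\|e_{n-1}^v\|_A+Q\tau^2$, and the standard discrete Gronwall inequality yields $\|e^x_n\|\le Q\tau\sum_{m=1}^{n-1}\|e_m^v\|_A+Q\tau$.

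Next I would treat the eigenvector errors by subtracting the third equation of (\ref{Refk}) from that of (\ref{FDsadk}) and splitting $v_i(t_n)-\tilde v_{i,n}=e_n^{v_i}+(v_{i,n}-\hat v_{i,n})+(\hat v_{i,n}-\tilde v_{i,n})$ as in (\ref{ev}); by Lemma \ref{lem2knew} and (\ref{lemimp}) the last two pieces are $O(\tau^2)$ with $k$-free constant. Every term other than $\sum_{j=1}^{i-1}\big(v_jv_j^\top H v_i-v_{j,n-1}v_{j,n-1}^\top Hv_{i,n-1}\big)$ is free of $k$ in the obvious way, and this remaining term is handled by the same projection splitting: the part where the projection acts on the difference is $k$-free, while the part involving $\sum_{j=1}^{i-1}(v_jv_j^\top-v_{j,n-1}v_{j,n-1}^\top)$ has norm $\le 2L_2\mathcal E_{n-1}$, so after multiplying by $\tau\beta\le Q_0\tau/k$ it contributes $Q\tau\|e_{n-1}^v\|_A$. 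The per-step truncation error here is $O(\beta\tau^2)=O(\tau^2/k)$ for the same reason (the sum $\sum_{j<i}v_j'v_j^\top$ inside $v_i''$ has norm $\le Qk\beta=O(1)$). One thus arrives, for each $i$, at $\|e_n^{v_i}\|\le(1+Q\tau)\|e^{v_i}_{n-1}\|+Q\tau\|e^x_{n-1}\|+Q\tau\|e_{n-1}^v\|_A+\frac{Q}{k}\tau^2$; summing over $i=1,\dots,k$ and dividing by $k$ gives $\|e_n^v\|_A\le(1+Q\tau)\|e_{n-1}^v\|_A+Q\tau\|e^x_{n-1}\|+Q\tau^2$. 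Inserting the bound for $\|e^x_{n-1}\|$ obtained above converts this into $\|e_n^v\|_A\le(1+Q\tau)\|e_{n-1}^v\|_A+Q\tau^2\sum_{m=1}^{n-1}\|e_m^v\|_A+Q\tau^2$, which is exactly the hypothesis (\ref{ZZ}) of Lemma \ref{lemgron} with $Z_n=\|e_n^v\|_A$; hence $\|e_n^v\|_A\le Q\tau$, and feeding this into $\|e^x_n\|\le Q\tau\sum_{m=1}^{n-1}\|e_m^v\|_A+Q\tau$ together with $n\tau\le T$ gives $\|e^x_n\|\le Q\tau$, completing the proof.

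I expect the real difficulty --- and the reason this is not a cosmetic variant of Theorem \ref{thmevk} --- to be the double bookkeeping of $k$, both mechanisms being controlled only by (\ref{betacond}). On one hand, the coupling terms coming from the projection differences naturally carry the full sum $\mathcal E_{n-1}$, and it is precisely the factor $\alpha=\beta\le Q_0/k$ that rewrites $\frac{1}{k}\mathcal E_{n-1}=\|e_{n-1}^v\|_A$; a weaker scaling would leave a genuine $k$-dependence after summing the $k$ eigenvector equations. On the other hand, although the truncation errors of the $v_i$-equations sum to something harmless after division by $k$, the truncation error of the single $x$-equation involves $\sum_{j=1}^k v_j'(t)v_j(t)^\top$, whose norm is of order $k\beta$; without (\ref{betacond}) this is $O(k)$ and the position error ceases to be $O(\tau)$. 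Verifying, purely from the projection structure of the right-hand sides of (\ref{sadk}) and the bound $k\beta=O(1)$, that $\|x''\|$ and each $\|v_i''\|$ are $O(1/k)$ is therefore the crux of index-robustness, and also explains (cf. the remark following Lemma \ref{lem2knew}) why (\ref{betacond}) cannot be relaxed in this theorem.
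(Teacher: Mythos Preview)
Your proposal is correct and follows essentially the same architecture as the paper's proof: use (\ref{betacond}) to convert the coupling contributions $\tau\beta\,\mathcal E_{n-1}$ into $Q\tau\|e^v_{n-1}\|_A$, establish the analogue of (\ref{MH}) with $\|e^v_m\|_A$ in place of $\mathcal E_m$, derive a per-$i$ recursion for $\|e^{v_i}_n\|$, average over $i$, substitute, and apply Lemma \ref{lemgron}. You are in fact more explicit than the paper on one point---you verify that the local truncation constants in (\ref{Refk}) are $k$-independent under (\ref{betacond}) by bounding $\|x''\|$ and $\|v_i''\|$ via $k\beta\le Q_0$, whereas the paper leaves this implicit in its ``$O(\tau^2)$''.
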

\begin{proof}. 
For the estimate of $e^x_n$, we observe from (\ref{ee}) that only its fourth right-hand side term could result in the dependence of $Q$ on $k$. As this term was carefully estimated in (\ref{proj1}), in which the constant $Q$ is independent from $k$, we could apply similar derivations as those for (\ref{MH}) in the proof of Theorem \ref{thmevk} and the condition (\ref{betacond}) to obtain
\begin{equation}\label{MHz}
\|e^x_{n}\|\leq Q\frac{\tau}{k}\sum_{m=1}^{n-1}\mathcal E_m +Q\tau= Q\tau\sum_{m=1}^{n-1}\|e^v_m\|_A +Q\tau ,~~1\leq n\leq N. 
\end{equation}

For the estimate of $e_n^{v_i}$ from (\ref{ev}), it suffices to pay attention on the difference of two summations
\begin{equation}
\begin{array}{rl}
&\ds \tau\beta \bigg\|\sum_{j=1}^{i-1}\big[ v_j(t_{n-1})v_j(t_{n-1})^\top H(x(t_{n-1}))v_i(t_{n-1})\\[0.05in]
&\ds\qquad\quad-v_{j,n-1}v_{j,n-1}^\top H(x_{n-1})v_{i,n-1}\big]\bigg\|\\
&\ds\qquad\leq\tau\beta\bigg\|\sum_{j=1}^{i-1}\big[e^{v_j}_{n-1}v_j(t_{n-1})^\top H(x(t_{n-1}))v_i(t_{n-1})\\[0.175in]
&\ds\qquad\quad+v_{j,n-1}(e^{v_j}_{n-1})^\top H(x(t_{n-1}))v_i(t_{n-1})\\[0.1in]
&\ds\qquad\quad+v_{j,n-1}v_{j,n-1}^\top (H(x(t_{n-1}))-H(x_{n-1}))v_i(t_{n-1})\\[0.1in]
&\ds\qquad\quad+v_{j,n-1}v_{j,n-1}^\top H(x_{n-1})e^{v_i}_{n-1}\big]  \bigg\|\\
&\ds\qquad\leq \frac{Q\tau}{k}\big(\mathcal E_{n-1}+\|e^x_{n-1}\|+\|e^{v_i}_{n-1}\|\big).
\end{array}
\end{equation}
Here the constant $Q$ is independent from $\alpha$, $\beta$ and $k$.
Then similar estimates as those for (\ref{hl})  yield
\begin{equation*}
\ds\| e^{v_i}_{n}\|\leq \|e^{v_i}_{n-1}\|+Q\frac{\tau}{k}\big(\|e^x_{n-1}\|+\|e^{v_i}_{n-1}\|\big)+Q\frac{\tau}{k}\mathcal E_{n-1} +Q\tau^2.
\end{equation*}
Adding this equation from $i=1$ to $k$ and multiplying the resulting equation by $k^{-1}$ lead to
$$\|e^v_n\|_A\leq \|e^v_{n-1}\|_A+Q\frac{\tau}{k}\|e^x_{n-1}\|+Q\frac{\tau}{k}\|e^v_{n-1}\|_A+Q\tau\|e^v_{n-1}\|_A+Q\tau^2. $$
 We invoke (\ref{MHz}) to obtain
$$\|e^v_n\|_A\leq \|e^v_{n-1}\|_A+Q\frac{\tau^2}{k}\sum_{m=1}^{n-1}\|e^v_m\|_A+Q\tau\|e^v_{n-1}\|_A+Q\tau^2. $$
Then an application of Lemma \ref{lemgron} leads to the conclusion of the theorem.
\end{proof}

\section{Concluding remarks}
In this paper we prove error estimates for numerical approximation to constrained high-index saddle dynamics, which extends the existing results for the classical high-index saddle dynamics without constraints in the literature. We then improve the proved results by developing an index-robust error estimate for the proposed numerical scheme in an averaged norm by adjusting the relaxation parameters $\alpha$ and $\beta$. These results provide theoretical supports for the numerical accuracy of discretization of constrained high-index saddle dynamics. 

It is worth mentioning that the current work mainly focuses on the constrained high-index saddle dynamics on the finite interval $t\in [0,T]$ and prove error estimates to show the dynamic convergence of numerical solutions. For most applications, the target saddle point $x_*$ could be reached within a certain number of iterations, which corresponds to a finite $T$ as imposed in the current work. Nevertheless, the convergence to the saddle point is theoretically determined by the rate of $x(t)\rightarrow x_*$ as $t\rightarrow \infty$, which suggests the error estimate of $x_n-x_*$ for $n\rightarrow \infty$ as those for optimization algorithms. In a very recent work \cite{Luo} such convergence analysis for the high-index saddle dynamics without constraints is performed. However, due to the stronger coupling between $x$ and $\{v_i\}_{i=1}^k$ in the constrained case, it is not straightforward to extend the methods in \cite{Luo} to analyze the convergence of $x_n-x_*$ for numerical methods to  the constrained high-index saddle dynamics (\ref{sadk}) and we will investigate this interesting topic in the near future.

\section*{Acknowledgements}
This work was partially supported by the National Natural Science Foundation of China No.~12050002 and 21790340, the the National Key Research and Development Program of China No. 2021YFF1200500, the  International Postdoctoral Exchange Fellowship Program (Talent-Introduction Program) No.~YJ20210019, and the China Postdoctoral Science Foundation No.~2021TQ0017 and 2021M700244.


\end{document}